\journal{...}
\newtheorem{theorem}{Theorem}[section]
\newtheorem{corollary}{Corollary}[section]
\newtheorem{lemma}{Lemma}[section]
\newtheorem{proposition}{Proposition}[section]
\newdefinition{remark}{Remark}[section]
\newdefinition{definition}{Definition}[section]
\newdefinition{example}{Example}[section]
\begin{document}
	\begin{frontmatter}
		\title{Common fixed point theorems under an implicit contractive condition on metric spaces endowed with an arbitrary binary relation and an application}
		\author[mad]{Md Ahmadullah\corref{cor1}}
		\cortext[cor1]{Corresponding author}
		\ead{ahmadullah2201@gmail.com}
		\author[mid]{Mohammad Imdad}
		\ead{mhimdad@gmail.com}
		\author[maf]{Mohammad Arif}
		\ead{mohdarif154c@gmail.com}
		\address[mad]{Department of Mathematics, Aligarh Muslim University, Aligarh,-202002, U.P., India.}
		\address[mid]{Department of Mathematics, Aligarh Muslim University, Aligarh,-202002, U.P., India.}
		\address[maf]{Department of Mathematics, Aligarh Muslim University, Aligarh,-202002, U.P., India.}
		
		\begin{abstract}
			The aim of this paper is to establish some metrical coincidence and
			common fixed point theorems with an arbitrary relation under an implicit contractive condition
			which is general enough to cover a multitude of well known contraction conditions in one go besides
			yielding several new ones. We also provide an example to demonstrate the generality of our results
			over several well known corresponding results of the existing literature. Finally, we utilize our results to prove an existence theorem for ensuring the solution of an integral equation.
		\end{abstract}
		\begin{keyword}
			Fixed point \sep
		complete metric spaces \sep
			binary relations \sep
			implicit relation \sep
			contraction mappings
			\MSC[2010] 47H10 \sep 54H25
		\end{keyword}
	\end{frontmatter}

\section{Introduction}
\label{SC:Introduction}
The origin of metric fixed point theory is solely attributed to classical Banach contraction principle which was originated in the Ph.D. thesis of Banach in 1920. This work was later published in the form of a research article \cite{Bnch1922} in 1922 which has already earned around 2000 google citations.
The strength of Banach contraction principle lies in its applications which fall within the several domain such as: Functional Analysis, General Topology, Algerbaic Topology, Differential Equation, Linear Algebra, Engineering Mathematics, Discrete Mathematics, Economics etc. In the long course of last several decade, this natural principle has been generalized and improved by several researchers in the different directions namely:
\begin{itemize}
	\item by weakening the involved metrical notions,
	\item by enlarging the class of underlying spaces,
	\item by replacing contraction condition with relatively weaker contractive condition,
\end{itemize}
and such practice is still in business.

Popa \cite{Popa1997} initiated the idea of an implicit relation which is designed to cover several well known contraction conditions of the existing literature in one go besides admitting several new ones. Indeed, the strength of an implicit relation lies in their unifying power besides being general enough to yield new contraction conditions. For further details on implicit relation,  one can consult \cite{AhmadullahAI, Alimdad2008,Alimdad2009, Berin2012,BerinV2012,ImdadS2002,IRA,
	Imdadanu2014,Popa2001,Popa1997} and references cited therein.

The initiation of order-theoretic metric fixed point theory can be attributed to Turinici \cite{Turinicid1986}. Often it is believed that such results were initiated in the interesting article of Ran and Reurings \cite{RanR2004} but this is not a reality. Indeed the results and application presented in Ran and Reurings are more natural and inspiring as compared to other relevant result of this kind. Thereafter, this natural result due to Ran and Reurings was notably generalized by Nieto and Rodr\'{i}guez-L\'{o}pez  \cite{NietoL2005,NietoL2007} which also remain the core results in this direction. In the recent year, various type of relation-theoretic fixed and common fixed point results were proved. For the work of this kind one can be referred \cite{AhmadullahAI,AI2016,AKI2014, AKI2015, Alamimdad,Alamimdad2,AIG,Berzig,
	Alimdad2008, Alimdad2009, ABK2016,
	NietoL2005, NietoL2007, Jach,Popa1997,
	SametT2012, Ccru2008, Turinicid1986} and references cited therein.

Recently, Ahmadullah et al. \cite{AhmadullahAI} established unified metrical fixed point
theorems via an implicit contractive condition employing relation-theoretic notions, which generalize several well known results of the existing literature.

\vspace{.3cm} Our aim of this paper is to prove relation-theoretic coincidence and common fixed point results under an implicit contractive condition. The main results of this paper are based on the following motivations and observations:
\begin{enumerate}
	\item [$(i)$] to extend the results of Ahmadullah et al. \cite{AhmadullahAI}  (especially Theorems 1 and 2) to a pair of self-mappings,
	\item [$(ii)$] the condition $\mathcal{R}$-completeness on the involved space $X$  in the earlier mentioned theorems (due to
	Ahmadullah et al. \cite{AhmadullahAI}) are replaced by relatively weaker condition of $\mathcal{R}$-completeness of any subspace
	 $Y \subseteq X$, wherein $T(X) \subseteq Y \subseteq X,$
	\item [$(iii)$] widening the class of continuous implicit relations by replacing it with the class of lower semi-continuous implicit relations, which also cover certain nonlinear contractions as well,
	\item [$(iv)$] examples are utilized to highlight the genueiness of our newly proved results, and
	\item [$(v)$] as an application of our main result, the existence of the solution of an integral equation is proved.
\end{enumerate}
\section{Preliminaries}
This section deals with some basic relevant definitions, lemmas and propositions.

\subsection{Implicit Relation}
 In order to describe our implicit relation, let $\Phi$ be the set of all non-negative real valued functions $\phi : \mathbb{R}_+ \to \mathbb{R}_+$ satisfying the following conditions:
 \begin{enumerate}
 	\item  [{$(i)$}] $\phi$ is increasing and $\phi(0)=0,$
 	\item [{$(ii)$}] $\displaystyle \sum_{n=1}^{\infty}\phi^{n}(t)<\infty, ~\text{for}~t>0,$ where $\phi^{n}$ is $n^{th}$-iterate.
 \end{enumerate}

  Let $\mathcal{G}$ be the collection of all lower semi-continuous real valued functions $G :\mathbb{R}^{6}_{+} \to \mathbb{R}$ which satisfy the following conditions:
\begin{enumerate}
	\item  [{$(G_{1})$}] $G$ is decreasing in the fifth and sixth variables; and $G(r,s,s,r,r+s,0)\leq 0$ for all
	$r, s \geq 0$ implies that there exists $\phi \in \Phi$ such that $r\leq \phi(s)$;
	\item [{$(G_{2})$}] $G(r, 0, r, 0, 0,r) > 0, \;{\textrm{for all}}\; r > 0.$
\end{enumerate}

Let $\mathcal{F}$ be collection of all lower semi-continuous real valued functions which is relativity smaller than $\mathcal{G}.$
Let $G :\mathbb{R}^{6}_{+} \to \mathbb{R}$ which satisfy $(G_1)$ and $(G_2)$ along with the following additional condition:

\begin{enumerate}
	\item [{$(G_3)$}] $G(r, r, 0, 0, r,r) > 0,$ for all $r > 0.$
\end{enumerate}

\begin{example} The function $G :\mathbb{R}^{6}_{+} \to \mathbb{R}$ defined by
$$G(r_1, r_2, r_3, r_4, r_5, r_6)={\begin{cases}r_1 - \varphi\Big( r_2\displaystyle{\frac{r_5 + r_6}{r_3+r_4}}\Big),~\hspace{.4cm}{\rm{if}}~~r_3+r_4\not=0;\cr
	r_1-r_2,~\hspace{1.6cm}{\rm{if}}~~r_3+r_4=0,\cr\end{cases}}$$
where $\varphi:\mathbb{R}_+ \to \mathbb{R}_+$ is upper semi-continuous mapping, satisfies the properties $(G_1)~ {\rm and}~ (G_2)$ with $\phi=\varphi$ but does not satisfy the property $(G_3)$.
\end{example}

\begin{example}
The implicit relations $G :\mathbb{R}^{6}_{+} \to \mathbb{R}$ defined below satisfy the foregoing requirements
(see \cite{AhmadullahAI, Alimdad2008, Berin2012, ImdadS2002, Popa2001,IRA}):

\vspace{.3cm} \indent
{\bf I.} $G(r_1, r_2, r_3, r_4, r_5, r_6)= r_1 - kr_2, \; \textrm{where}\; k\in [0, 1)$;

\vspace{.3cm} \indent
{\bf II.} $G(r_1, r_2, r_3, r_4, r_5, r_6)= r_1 - \varphi\big(r_2\big),~ \textrm{where}\; \varphi: \mathbb{R}_+ \to \mathbb{R}_+ ~ {\text{is ~an~upper semi-continuous ~mapping~such ~that}}$
 	
  \indent $\varphi(t)<t,~ \forall t>0;$

\vspace{.3cm} \indent
{\bf III.} $G(r_1, r_2, r_3, r_4, r_5, r_6)= r_1 - k(r_3+ r_4), \; \textrm{where}\; k\in [0, 1/2)$;

\vspace{.3cm} \indent
{\bf IV.} $G(r_1, r_2, r_3, r_4, r_5, r_6)= r_1 - k(r_5+ r_6), \; \textrm{where}\; k\in [0, 1/2)$;

\vspace{.3cm} \indent
{\bf V.} $G(r_1, r_2, r_3, r_4, r_5, r_6)= r_1 - a_1r_2 - a_2(r_3+r_4) - a_3(r_5+r_6), \; \textrm{where}\; a_1,a_2,a_3\in [0, 1) \; \textrm{and} \; a_1+2a_2+2a_3<1$;

\vspace{.3cm} \indent
{\bf VI.} $G(r_1, r_2, r_3, r_4, r_5, r_6)= r_1 - kr_2 - L\; min\{r_3, r_4,  r_5, r_6\}, \; \textrm{where}\; k\in [0, 1)\;\textrm{and} \; L\geq 0$;

\vspace{.3cm} \indent
{\bf VII.} $G(r_1, r_2, r_3, r_4, r_5, r_6)= r_1 - k\;max\big\{r_2, r_3, r_4, \frac{r_5 +r_6}{2}\big\}- L\; min\{r_3, r_4,  r_5, r_6\}$, where $k\in [0, 1)\; \textrm{and}\; L\geq 0;$

\vspace{.3cm} \indent
{\bf VIII.} $G(r_1, r_2, r_3, r_4, r_5, r_6)= r_1 - k\;max\{r_2, r_3, r_4, r_5, r_6\}, \; \textrm{where}\; k\in [0, 1/2)$;

\vspace{.3cm} \indent
{\bf IX.} $G(r_1, r_2, r_3, r_4, r_5, r_6)= r_1-(a_1r_2+a_2r_3+a_3r_4+a_4r_5+a_5r_6),$ where
$a_i$'$s>0 ~({\rm for}~i=1,2,3,4,5); \;{\rm and}$

\indent sum of them is strictly less than 1;

\vspace{.3cm} \indent
{\bf X.} $G(r_1, r_2, r_3, r_4, r_5, r_6)= r_1-k\;max \Big\{r_2,r_3,r_4,\displaystyle\frac{r_5}{2},\displaystyle\frac{r_6}{2}\Big\},$ where $k\in [0,1)$;

\vspace{.3cm} \indent
{\bf XI.} $G(r_1, r_2, r_3, r_4, r_5, r_6)= r_1-k\;max \{r_2, r_3, r_4\}-(1-k)(ar_5+ br_6), \;
{\rm where}\; k\in[0,1) \;{\rm and} \; 0\leq a,b <{1/2};$

\vspace{.3cm} \indent
{\bf XII.} $G(r_1, r_2, r_3, r_4, r_5, r_6)= r_1^2-r_1\big(a_1r_2+a_2r_3+a_3r_4\big)-a_4r_5r_6$, where $a_1>0; a_2,a_3,a_4\geq 0;$

\indent $a_1+a_2+a_3<1$ and $a_1+a_4<1;$

\vspace{.3cm} \indent
{\bf XIII.} $G(r_1, r_2, r_3, r_4, r_5, r_6)= {\begin{cases}r_1 - k r_2\displaystyle{\frac{r_5 + r_6}{r_1+ r_2}},~\hspace{.3cm}{\rm{if}}~~r_1+r_2\not=0;\cr
	r_1,~\hspace{2.1cm}{\rm{if}}~~r_1+r_2=0,\cr\end{cases}} ~\textrm{where}\; k\in [0, 1)$;

\vspace{.3cm} \indent {\bf XIV.} $G(r_1, r_2, r_3, r_4, r_5, r_6)=
r_1^2-a_1~max\{r_2^2,r_3^2,r_4^2\}-a_2~max\{r_3r_5,r_4r_6\}-a_3r_5r_6,$
where $a_i$'s $\geq 0~ ({\rm for} ~i=1,2,3);$

\indent $a_1+2a_2<1$ and $a_1+a_3<1$;

\vspace{.3cm} \indent
{\bf XV.} $G(r_1, r_2, r_3, r_4, r_5, r_6)= r^3_1- k\big(r^3_2+ r^3_3+ r^3_4+ r^3_5+ r^3_6\big),$ where $k\in [0,1/11);$

\vspace{.3cm} \indent
{\bf XVI.} $G(r_1, r_2, r_3, r_4, r_5, r_6)= {\begin{cases}r_1-a_1\displaystyle\frac{r_2r_4}{r_2+r_4}-a_2\frac{r_3r_6}{r_5 + r_6 + 1},~\hspace{.1cm}{\rm{if}}~~r_2+r_4\not=0;\cr
	\hspace{0.0in}r_1,~\hspace{4.1cm}{\rm{if}}~~r_2+r_4=0,\cr\end{cases}}$

\indent where $a_1,a_2> 0~{\rm and}~ a_1<2.$
\end{example}

\subsection{Relevant relation-theoretic notions}
With a view to have a possibly self-contained presentation, we recall some basic definitions, lemmas and propositions needed in our subsequent discussion.

\begin{definition} \cite{Jungck1986, Jungck1996} Let $T$ and $g$ be two self-mappings defined on a non-empty set  $X$. Then
\begin{enumerate}[$(i)$]
	\item a point $x\in X$ is said to be a coincidence point of $T$ and $g$ if
	$Tx=gx,$
	\item a point $\overline{x}\in X$ is said to be a point of coincidence of $T$ and $g$ if there exists some $x\in X$ such that $\overline{x}=Tx=gx,$
	\item a coincidence point $x\in X$ of $T$ and $g$, is said to be a common fixed point if $x=Tx=gx,$
	\item  $T$ and $g$ are called commuting if $T(gx)=g(Tx), \forall ~x\in X$.
	\end{enumerate}
\end{definition}

\begin{definition} \cite{Jungck1986, Sastry2000, Sessa1982} Let $T$ and $g$ be two self-mappings defined on a metric space $(X,d).$ Then
\begin{enumerate}[$(i)$]
	\item $T$ and $g$ are said to be weakly commuting if for all $x\in X$,
~~	$d(T(gx),g(Tx))\leq d(Tx,gx),$
	\item  $T$ and $g$ are said to be compatible if
	$\lim_{n\to \infty}d(T(gx_n),g(Tx_n))=0$
	whenever $\{x_n\}\subset X$ is a sequence such that $\lim_{n\to \infty} gx_n=\lim_{n\to \infty}Tx_n,$
	
	\item  $T$ is said to be a $g$-continuous at $x\in X$ if ${Tx_n}\stackrel{d}{\longrightarrow} Tx$ whenever
	${gx_n}\stackrel{d}{\longrightarrow}gx,$ for all sequence $\{x_n\}\subset X$.
	Moreover, $T$ is said to be a $g$-continuous if it is continuous at every point of $X.$
\end{enumerate}
\end{definition}

\begin{definition} \cite{Lips1964} A subset $\mathcal{R}$ of
$X\times X$ is called a binary relation on X. We say that
``$x$ relates $y$ under $\mathcal{R}$" if and only if $(x,y)\in
\mathcal{R}$.
\end{definition}

Throughout this paper, $\mathcal{R}$ stands for a `non-empty binary relation' ($i.e.,\mathcal{R} \ne \emptyset$) instead of `binary relation'
while $\mathbb{N}_{0}$ denotes the set of whole numbers $i.e., \mathbb{N}_{0}= \mathbb{N}\cup \{0\}.$

\begin{definition} \cite{Maddux2006} A binary relation $\mathcal{R}$ defined on a non-empty set $X$ is called complete if every pair of elements of $X$ are comparable under that relation $i.e.,$ for all
	$x, y$ in $X,$ either $(x,y)\in \mathcal{R}$ or $(y,x)\in \mathcal{R}$  which is denoted by $[x,y]\in \mathcal{R}$.
\end{definition}

\begin{proposition} \cite{Alamimdad} Let $\mathcal{R}$ be a binary relation defined on a non-empty set $X$. Then
$(x,y)\in\mathcal{R}^s\Longleftrightarrow [x,y]\in\mathcal{R}.$
\end{proposition}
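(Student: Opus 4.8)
The plan is to prove this equivalence by a direct unfolding of the definitions involved, since both sides of the biconditional are merely different notations for the same logical disjunction. I would first recall the standard definition of the symmetric closure, namely $\mathcal{R}^s := \mathcal{R} \cup \mathcal{R}^{-1}$, where the transpose relation is given by $\mathcal{R}^{-1} = \{(x,y) \in X \times X : (y,x) \in \mathcal{R}\}$. This definition is not stated explicitly in the excerpt, so making it explicit at the outset is the first order of business.

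For the forward implication, I would assume $(x,y) \in \mathcal{R}^s$. By the definition of union this means $(x,y) \in \mathcal{R}$ or $(x,y) \in \mathcal{R}^{-1}$, and in the latter case the definition of the transpose yields $(y,x) \in \mathcal{R}$. Hence $(x,y) \in \mathcal{R}$ or $(y,x) \in \mathcal{R}$, which is precisely the assertion abbreviated by $[x,y] \in \mathcal{R}$ in the definition of a complete relation given above.

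The converse runs along the same lines read in reverse: assuming $[x,y] \in \mathcal{R}$ means $(x,y) \in \mathcal{R}$ or $(y,x) \in \mathcal{R}$; in the first case $(x,y) \in \mathcal{R} \subseteq \mathcal{R}^s$, and in the second case $(y,x) \in \mathcal{R}$ gives $(x,y) \in \mathcal{R}^{-1} \subseteq \mathcal{R}^s$. Either way $(x,y) \in \mathcal{R}^s$, which closes the equivalence.

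Since the argument is a pure definition chase, there is no genuine technical obstacle to surmount. The only point requiring any care is to pin down the (otherwise unstated) meaning of $\mathcal{R}^s$ as the symmetric closure $\mathcal{R} \cup \mathcal{R}^{-1}$, so that one can verify that the disjunction encoded in the notation $[x,y] \in \mathcal{R}$ coincides exactly with membership in $\mathcal{R} \cup \mathcal{R}^{-1}$.
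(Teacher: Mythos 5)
Your proof is correct and follows essentially the same route as the paper's source for this result: the proposition is quoted from Alam and Imdad, where it is established by exactly this definition chase, unfolding $\mathcal{R}^s=\mathcal{R}\cup\mathcal{R}^{-1}$ and the disjunction encoded in the notation $[x,y]\in\mathcal{R}$. Nothing further is needed.
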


\begin{definition} \cite{Alamimdad} \label{3.5} Let $T$ be a self-mapping defined on a non-empty set $X$. Then a binary relation
	 $\mathcal{R}$ on $X$ is called $T$-closed if $(Tx,Ty)\in \mathcal{R}$ whenever
$(x,y)\in \mathcal{R},~ {\rm for~ all}~ x,y\in X.$
\end{definition}

\begin{definition}  \cite{Alamimdad2} \label{3.6} Let $T$ and $g$ be two
self-mappings defined on a non-empty set $X$. Then a binary relation $\mathcal{R}$ on $X$  is called $(T,g)$-closed if $(Tx,Ty)\in \mathcal{R}$ whenever
$(gx,gy)\in \mathcal{R},~ {\rm for~ all}~ x,y\in X.$
\end{definition}

Notice that on setting $g=I,$ the identity mapping on $X,$ Definition \ref{3.6} reduces to Definition \ref{3.5}.

\begin{definition} \cite{Alamimdad} Let $\mathcal{R}$ be a binary relation  defined on a non-empty set $X$. Then a sequence $\{x_n\}\subset X$ is said to be $\mathcal{R}$-preserving if $(x_n,x_{n+1})\in\mathcal{R},\;\;\forall~n\in \mathbb{N}_{0}.$
\end{definition}

\begin{definition} \cite{Alamimdad2} Let $(X,d)$ be a metric space equipped with a binary relation $\mathcal{R}$.
Then $(X,d)$ is said to be $\mathcal{R}$-complete if every $\mathcal{R}$-preserving Cauchy sequence in $X$ converges to a point in $X$.
\end{definition}

\begin{remark} \cite{Alamimdad2} \label{rmk1} Every complete metric space is $\mathcal{R}$-complete, where $\mathcal{R}$ denotes a
	binary relation. Particularly, if $\mathcal{R}$ is universal relation, then notions of completeness
	and $\mathcal{R}$-completeness coincide.
\end{remark}

\begin{definition} \cite{Alamimdad2}\label{3.9} Let $(X,d)$ be a metric space equipped with a binary relation $\mathcal{R}$. Then a self-mapping $T$ on $X$ is said to be $\mathcal{R}$-continuous at $x$ if $Tx_n\stackrel{d}{\longrightarrow} Tx$ whenever $x_n\stackrel{d}{\longrightarrow} x$, for any $\mathcal{R}$-preserving sequence $\{x_n\}\subset X$. Moreover, $T$ is said to be
$\mathcal{R}$-continuous if it is $\mathcal{R}$-continuous at every point of $X$.
\end{definition}

\begin{definition} \cite{Alamimdad2}\label{3.10} Let $(X,d)$ be a metric space equipped with a binary relation $\mathcal{R}$ and $g$ a self-mapping on $X$. Then a self-mapping $T$ on $X$ is said to be $(g,\mathcal{R})$-continuous at $x$ if $Tx_n\stackrel{d}{\longrightarrow} Tx$, for
any $\mathcal{R}$-preserving sequence $\{x_n\}\subset X$ with
$gx_n\stackrel{d}{\longrightarrow} gx$. Moreover, $T$ is called
$(g,\mathcal{R})$-continuous if it is $(g,\mathcal{R})$-continuous at every point of $X$.
\end{definition}

Notice that on setting $g=I,$ the identity mapping on $X,$ Definition \ref{3.10} reduces to Definition \ref{3.9}.

\begin{remark} \label{rmk2} Every continuous mapping is $\mathcal{R}$-continuous, where $\mathcal{R}$ denotes a
	binary relation. Particularly, if $\mathcal{R}$ is universal relation, then notions of $\mathcal{R}$-continuity
	and continuity coincide.
\end{remark}

\begin{definition} \cite{Alamimdad} \label{3.11}Let $(X,d)$ be a metric space. Then a binary relation $\mathcal{R}$ on $X$ is said to be
$d$-self-closed if for any $\mathcal{R}$-preserving sequence
$\{x_n\}$ with $x_n\stackrel{d}{\longrightarrow} x$, there
is a subsequence $\{x_{n_k}\}{\rm \;of\;} \{x_n\}$
	such that $[x_{n_k},x]\in\mathcal{R},~~{\rm for~all}~k\in \mathbb{N}_{0}.$
\end{definition}

\begin{definition} \cite{SametT2012} Let $(X,d)$ be a metric space equipped with a binary relation $\mathcal{R}$. Then a subset $D$ of $X$
is said to be $\mathcal{R}$-directed if for every pair of points $x,y$ in $D$, there
is $z$ in $X$ such that $(x,z)\in\mathcal{R}$ and
$(y,z)\in\mathcal{R}$.
\end{definition}

\begin{definition} \cite{SametT2012} Let $(X,d)$ be a metric space equipped with a binary relation $\mathcal{R}$ and $g$ a self-mapping on $X$. Then a subset $D$ of $X$
is said to be $(g,\mathcal{R})$-directed if for every pair of points $x,y$ in $D$, there
is $z$ in $X$ such that $(x,gz)\in\mathcal{R}$ and
$(y,gz)\in\mathcal{R}.$
\end{definition}

\begin{definition} \cite{KRSS2014}
 Let $(X,d)$ be a metric space equipped with  a binary relation
$\mathcal{R}$ and $T,g$ two
self-mappings on $X$. Then $T$ and $g$ are said to be
$\mathcal{R}$-compatible if $\lim\limits_{n\to \infty}d(g(Tx_n),T(gx_n))=0$, whenever $\lim\limits_{n\to \infty}g(x_n)=\lim\limits_{n\to \infty}T(x_n)$, for any sequence $\{x_n\}\subset X$ such
that the sequences $\{Tx_n\}$ and $\{gx_n\}$ are $\mathcal{R}$-preserving.

\end{definition}

\begin{definition} \cite{KBR2000} \label{3.15} Let $\mathcal{R}$ be a binary relation defined on a
non-empty set $X$ and $x,y$ a pair of points in $X$. If there is a finite sequence
$\{w_0,w_1,w_2,...,w_{l}\}\subset X$ such that $w_0=x, w_l=y$ and $(w_i,w_{i+1})\in\mathcal{R}$
for each $i\in \{0,1,2,\cdots ,l-1\},$ then this finite sequence is called a path of length $l$ (where $l\in \mathbb{N}$) joining
$x$ to $y$ in $\mathcal{R}$.
\end{definition}

For our future use, we also introduce the following definition:
\begin{definition} \label{3.16} Let $\mathcal{R}$ be a binary relation defined on a
non-empty set $X$ and $g$ a self-mapping on $X$. If for a pair of points $x,y$  in $X$, there is a finite sequence
$\{w_0,w_1,w_2,...,w_{l}\}\subset X$ such that $gw_0=x, gw_l=y$ and $(gw_i,gw_{i+1})\in\mathcal{R}$
for each $i\in \{0,1,2,\cdots ,l-1\},$ then the finite sequence $\{w_0,w_1,w_2,...,w_{l}\}$ is called a $g$-path of length $l$ (where $l\in \mathbb{N}$) joining
$x$ to $y$ in $\mathcal{R}$.
\end{definition}

Notice that, a path of length $l$ involves $(l+1)$ elements of $X$
and need not be distinct in general. Observe that with $g=I$ (the identity mapping on $X$), Definition \ref{3.16} reduces to Definition \ref{3.15}.

\begin{lemma}\label{lm1} \cite{HRS2011}
	Let $g$ be a self-mapping defined on a non-empty set $X$. Then there exists a subset $Z\subseteq X$ with $g(Z)=g(X)$ and $g: Z\to X$ is one-one.
\end{lemma}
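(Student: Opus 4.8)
The plan is to build $Z$ by selecting, for each element of the image, a single point of the domain mapping onto it; the restriction of $g$ to the resulting set is then automatically injective while the image is preserved. First I would organize the domain according to the fibers of $g$: for each $y \in g(X)$ the fiber $g^{-1}(y) = \{x \in X : gx = y\}$ is non-empty by the very definition of $g(X)$, and the collection of all such fibers partitions $X$ into pairwise disjoint non-empty sets.

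Next, invoking the axiom of choice, I would pick exactly one representative $x_y \in g^{-1}(y)$ for every $y \in g(X)$ and define $Z = \{x_y : y \in g(X)\}$. Since $Z \subseteq X$, the inclusion $g(Z) \subseteq g(X)$ is immediate. For the reverse inclusion, given an arbitrary $y \in g(X)$, the chosen point $x_y$ belongs to $Z$ and satisfies $g(x_y) = y$, so $y \in g(Z)$; combining the two inclusions yields $g(Z) = g(X)$, as required.

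Finally I would check that the restriction $g : Z \to X$ is one-one. Suppose $z_1, z_2 \in Z$ with $g z_1 = g z_2 =: y$. Then both $z_1$ and $z_2$ lie in the fiber $g^{-1}(y)$; but by construction $Z$ intersects this fiber in the single chosen representative $x_y$, forcing $z_1 = x_y = z_2$. Hence $g$ is injective on $Z$.

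I do not anticipate any genuine obstacle, since the statement is purely set-theoretic: the only step warranting explicit mention is the simultaneous selection of representatives across the (possibly uncountable) family of fibers, which is exactly where the axiom of choice is used. Everything else is a direct verification of the two required properties.
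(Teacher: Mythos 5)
Your proof is correct and is essentially the same argument as in the cited source \cite{HRS2011}: there one defines the equivalence relation $x \sim y \Leftrightarrow gx = gy$ (whose classes are exactly your fibers) and uses the axiom of choice to select one representative per class, which yields $Z$ with $g(Z)=g(X)$ and $g|_Z$ injective. Nothing is missing; your explicit verification of the two required properties matches the standard proof.
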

\vspace{.2cm}
Given a non-empty set $X$, a binary relation $\mathcal{R}$ on $X$, self-mappings $T, g$ on $X$ and a $\mathcal{R}$-directed subset $D$ of $X$, we use the following notations:
\begin{itemize}
	\item $C(T,g)$: the collection of all coincidence points of $T$and $g$;
	\item $X(T,g,\mathcal{R})$: the set of all points in $w\in X$ such that $(gw,Tw)\in \mathcal{R}$;
	\item $\Delta(D,g,\mathcal{R}) :=\displaystyle \cup_{x,y\in D}\big\{z\in X: (x,gz)\in \mathcal{R} ~\text{and}~ (y,gz)\in \mathcal{R}\big\}$;
	\item $\Upsilon_g(x,y,\mathcal{R})$: the collection of all $g$-paths joining $x$ to $y$ in $\mathcal{R}$ where $x,y \in X$;
	\item $\Upsilon_{g}(x,y,T,\mathcal{R})$: the collection of all $g$-paths $\{w_0,w_1,w_2,...,w_{l}\}$ joining $x$ to $y$ in $\mathcal{R}$ such that $[gw_i, Tw_i]\in\mathcal{R}$ for each $i\in \{1,2,3,\cdots ,l-1\}.$
\end{itemize}

Notice that, with $g=I$, identity mapping on $X$, the family $\Upsilon_{g}(x,y,T,\mathcal{R})$ coincides with $\Upsilon(x,y,T,\mathcal{R}).$

\section{Main results}
\label{SC:Main results}

Now, we are equipped to prove our main result as under:

\begin{theorem}\label{thm1}
Let $(X,d)$ be a metric space equipped with a binary relation $\mathcal{R}$ and $Y$ an $\mathcal{R}$-complete subspace of $X$. Let $T$ and $g$ be two self-mappings on
$X$. Assume that the following conditions hold:
\begin{enumerate}
	\item [$(a)$] $\exists$ $x_0 \in X$ such that $(gx_0, Tx_0)\in\mathcal{R},$
	\item [$(b)$]  $T(X)\subseteq Y \cap g(X),$
	\item [$(c)$] $\mathcal{R}$ is $(T,g)$-closed,
	\item [$(d)$] there exists an implicit relation $G\in \mathcal{G}$ such that $\big({for~ all}~ x,y\in X\;\textrm{with}\; (gx,gy)\in \mathcal{R}\big)$
	$$G\big(d(Tx,Ty), d(gx,gy), d(gx,Tx), d(gy,Ty), d(gx,Ty), d(gy,Tx)\big)\leq  0,$$
	\item [{$(e)$}] \begin{enumerate}
		            \item [$(e_1)$]  $Y \subseteq g(X)$
		            \item [$(e_2)$] either $T$ is $(g,\mathcal{R})$-continuous or $T$ and $g$ are continuous or $\mathcal{R}|_Y$ is $d$-self-closed,
	                \end{enumerate}
	\end{enumerate}
	or, alternatively
	\begin{enumerate}
	\item [{$(e^\prime)$}] \begin{enumerate}
		\item [$(e_1^{\prime})$]  $T$ and $g$ are $\mathcal{R}$-compatible,
		\item [$(e_2^{\prime})$]  $T$ and $g$ are $\mathcal{R}$-continuous.
         \end{enumerate}
	\end{enumerate}
Then $T$ and $g$ have a coincidence point.	
\end{theorem}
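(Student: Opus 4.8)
The plan is to run a Jungck--Picard iteration and extract a Cauchy sequence through the implicit relation. First I would use hypothesis $(b)$, namely $T(X)\subseteq g(X)$, to choose inductively a sequence $\{x_n\}\subset X$ with $gx_{n+1}=Tx_n$ for every $n\ge 0$; writing $d_n=d(gx_n,gx_{n+1})$, if some $d_n=0$ then $x_n\in C(T,g)$ and we are finished, so I assume $d_n>0$ throughout. Starting from $(gx_0,Tx_0)=(gx_0,gx_1)\in\mathcal{R}$ (hypothesis $(a)$) and using that $\mathcal{R}$ is $(T,g)$-closed (hypothesis $(c)$), an easy induction shows $(gx_n,gx_{n+1})\in\mathcal{R}$ for all $n$, i.e. $\{gx_n\}$ is $\mathcal{R}$-preserving.

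The central step is proving $\{gx_n\}$ is Cauchy. Applying $(d)$ to the pair $(x_n,x_{n+1})$ and substituting $Tx_n=gx_{n+1}$, $Tx_{n+1}=gx_{n+2}$ turns the six arguments into $\big(d_{n+1},d_n,d_n,d_{n+1},d(gx_n,gx_{n+2}),0\big)$, so
\[
G\big(d_{n+1},d_n,d_n,d_{n+1},d(gx_n,gx_{n+2}),0\big)\le 0.
\]
Since $G$ is decreasing in its fifth variable and $d(gx_n,gx_{n+2})\le d_n+d_{n+1}$ by the triangle inequality, this upgrades to $G\big(d_{n+1},d_n,d_n,d_{n+1},d_n+d_{n+1},0\big)\le 0$, which is exactly the template of $(G_1)$ with $r=d_{n+1}$ and $s=d_n$. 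Hence there is $\phi\in\Phi$ with $d_{n+1}\le\phi(d_n)$, and iterating gives $d_n\le\phi^n(d_0)$; the summability $\sum_n\phi^n(d_0)<\infty$ forces $\sum_n d_n<\infty$, so $\{gx_n\}$ is Cauchy. Because $gx_{n+1}=Tx_n\in T(X)\subseteq Y$, this $\mathcal{R}$-preserving Cauchy sequence eventually lies in $Y$, and $\mathcal{R}$-completeness of $Y$ yields $gx_n\to z$ for some $z\in Y$.

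It remains to promote $z$ to a coincidence point, where the two alternative hypotheses split the argument. Under $(e)$ we have $Y\subseteq g(X)$, so $z=gu$ for some $u\in X$; if $T$ is $(g,\mathcal{R})$-continuous (the case of continuous $T,g$ being analogous) then $Tx_n\to Tu$ while $Tx_n=gx_{n+1}\to z=gu$, forcing $Tu=gu$, whereas if $\mathcal{R}|_Y$ is $d$-self-closed I pick a subsequence with $[gx_{n_k},gu]\in\mathcal{R}$, feed the pair $(u,x_{n_k})$ (or $(x_{n_k},u)$) into $(d)$, and pass to the limit: lower semi-continuity of $G$ together with $gx_{n_k}\to gu$ and $Tx_{n_k}\to gu$ collapses the inequality to $G\big(r,0,r,0,0,r\big)\le 0$ (or, in the reversed order, to the $s=0$ instance of $(G_1)$), where $r=d(gu,Tu)$; property $(G_2)$ (resp. $\phi(0)=0$) then forces $r=0$, i.e. $Tu=gu$. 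Under the alternative $(e')$, both $\{gx_n\}$ and $\{Tx_n\}=\{gx_{n+1}\}$ are $\mathcal{R}$-preserving and converge to $z$, so $\mathcal{R}$-continuity of $T$ and $g$ gives $T(gx_n)\to Tz$ and $g(Tx_n)\to gz$, while $\mathcal{R}$-compatibility gives $d\big(g(Tx_n),T(gx_n)\big)\to 0$; combining the three limits yields $d(Tz,gz)=0$, so $z\in C(T,g)$.

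The main obstacle is the Cauchy step: the whole argument hinges on massaging the raw inequality from $(d)$ into the precise shape $G(r,s,s,r,r+s,0)\le 0$ required by $(G_1)$, which is legitimate only because $G$ is monotone in its last two slots and the off-diagonal distance $d(gx_n,gx_{n+2})$ is controlled by $d_n+d_{n+1}$; getting the direction of the monotonicity correct is the delicate point. The secondary difficulty is the limit in the $d$-self-closed case, which needs the lower semi-continuity of $G$ and a careful split according to which way the relation $[gx_{n_k},gu]$ points.
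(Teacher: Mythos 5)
Your overall architecture — the Jungck--Picard iteration $gx_{n+1}=Tx_n$, the induction via $(T,g)$-closedness, the upgrade of the contractive inequality to $G(r,s,s,r,r+s,0)\le 0$ using monotonicity in the fifth variable plus the triangle inequality, the summability argument for the Cauchy property, the $(g,\mathcal{R})$-continuity subcase, the $d$-self-closed subcase (including the correct split: the $s=0$ instance of $(G_1)$ when $(gx_{n_k},gu)\in\mathcal{R}$, and $(G_2)$ when $(gu,gx_{n_k})\in\mathcal{R}$), and the compatibility alternative $(e')$ — coincides essentially step for step with the paper's own proof.

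There is, however, one genuine gap: your parenthetical claim that the subcase ``$T$ and $g$ are continuous'' is ``analogous'' to the $(g,\mathcal{R})$-continuous subcase. It is not, and the argument fails if run the same way: plain continuity of $T$ and $g$ does not let you pass from $gx_n\stackrel{d}{\longrightarrow} gu$ to $Tx_n\stackrel{d}{\longrightarrow} Tu$, because nothing forces $x_n\to u$ (the map $g$ need not be injective, and even where injective its inverse need not be continuous). The implication ``$gx_n\to gu \Rightarrow Tx_n\to Tu$'' is exactly what the \emph{definition} of $(g,\mathcal{R})$-continuity postulates; ordinary continuity of the two maps separately gives no such thing. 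The paper needs an extra device here, namely Lemma \ref{lm1} (Haghi--Rezapour--Shahzad): choose $Z\subseteq X$ with $g(Z)=g(X)$ and $g|_Z$ one-to-one, note that $T(X)\subseteq g(Z)\cap Y$ and $Y\subseteq g(X)=g(Z)$ allow the iteration and the point $w$ to be taken inside $Z$, and define $h:g(Z)\to g(X)$ by $h(gz)=Tz$, which is well defined by injectivity; the paper then argues $h$ is continuous (since $T$ and $g$ are) and concludes $Tw=h(gw)=\lim_{n}h(gx_n)=\lim_{n}Tx_n=gw$. Without this lemma, or some substitute for it, your proposal proves the theorem only under two of the three alternatives listed in $(e_2)$, so it does not establish the full statement as claimed.
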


\begin{proof} Suppose $x_0\in X$ such that $(gx_0,Tx_0)\in
\mathcal{R}$ \big(hypothesis $(a)$\big). In view of $(b)$, $T(X) \subseteq Y$ and $T(X) \subseteq g(X),$ we choose $x_1 \in X$ so that $gx_1=Tx_0$. Next, choose $x_2 \in X$ such that $gx_2=Tx_1$. Continuing in this way, we get
\begin{equation}\label{eq1}
gx_{n+1}=Tx{_n},\; \forall~ n\in \mathbb{N}_0.
\end{equation}
Using the hypothesis $(c)$, we have
$$(Tx_0,T^2x_0),(T^2x_0,T^3x_0),\cdots ,(T^nx_0,T^{n+1}x_0),\cdots\in \mathcal{R}.$$
Notice that,
\begin{equation}\label{eq2}
(gx_n,gx_{n+1})\in \mathcal{R},\;\;\forall~n\in \mathbb{N}_0,
\end{equation}
so that the sequence $\{gx_n\}$ is $\mathcal{R}$-preserving.
On using the condition $(d)$, we have (for all $n\in \mathbb{N}_0$)
\begin{equation*}
G\big(d(Tx_{n},Tx_{n+1}), d(gx_{n},gx_{n+1}), d(gx_{n},Tx_{n}), d(gx_{n+1},Tx_{n+1}),d(gx_{n},Tx_{n+1}), d(gx_{n+1},Tx_{n})\big)\leq  0,
\end{equation*}
 or,
\begin{equation*}
G\big(d(gx_{n+1},gx_{n+2}), d(gx_{n},gx_{n+1}), d(gx_{n},gx_{n+1}), d(gx_{n+1},gx_{n+2}),
d(gx_{n},gx_{n+2}), d(gx_{n+1},gx_{n+1})\big)\leq 0.
\end{equation*}

\vspace{.2cm}
\noindent
Putting $r=d(gx_{n+1},gx_{n+2})$ and $s=d(gx_{n},gx_{n+1})$ in the above inequality, we have
$$G\big(r,s,s,r,d(gx_{n},gx_{n+2}),0\big)\leq 0.$$
On using triangular inequality and decreasing property of $G$ in the fifth variable, we have
$$G\big(r,s,s,r,r+s,0\big)\leq 0,$$
implying thereby (owing to ($G_{1}$)) the existence of some $\phi\in \Phi$ such that $r\leq \phi(s),\; i.e.,$
$$
d(gx_{n+1},gx_{n+2})\leq  \phi\big(d(gx_{n},gx_{n+1})\big),
$$
which inductively gives arise
\begin{equation}\label{eq3}
d(gx_{n+1},gx_{n+2})\leq \phi^{n+1} \big(d(gx_0,gx_1)\big),\;\forall~n\in \mathbb{N}_0.
\end{equation}
Using (\ref{eq3}) and triangular inequality, for all $n,m\in \mathbb{N}_0$ with $m>n$, we have
\begin{eqnarray*}
	\nonumber d(gx_{n},gx_{m})&\leq& d(gx_{n},gx_{n+1})+d(gx_{n+1},gx_{n+2})+\cdots+d(gx_{m-1},gx_{m})\\
	&\leq& \sum\limits_{j=n}^{m-1} \phi^{j}\big(d(gx_0,gx_1)\big)\\
	&\leq&\sum\limits_{j\geq n} \phi^{j}\big(d(gx_0,gx_1)\big)\\
	&\rightarrow& 0\;{\rm as}\; n\rightarrow \infty.
\end{eqnarray*}
Therefore, $\{gx_n\}$ is a Cauchy sequence in $Y$ (in view (\ref{eq1}) and $T(X) \subseteq Y$). Hence, $\{gx_n\}$ is an  $\mathcal{R}$-preserving Cauchy sequence in $Y$.
Since $Y$ is $\mathcal{R}$-complete, $\exists$ $y\in Y$ such that $gx_n\stackrel{d}{\longrightarrow} y.$
As $Y \subseteq g(X)$ there exists some $w\in~X$ such that
\begin{equation}\label{eq4}
\lim_{n\to\infty}gx_n =y=gw.
\end{equation}
In view of the hypothesis $(e_2)$, firstly we assume that $T$ is $(g,\mathcal{R})$-continuous. On using (\ref{eq2}) and (\ref{eq4}), we get
$$\lim_{n\to \infty}gx_{n+1}=\lim_{n\to \infty}Tx_n =Tw.$$ By the uniqueness of limit, we have
$Tw=gw$, so that $w$ is a coincidence point of $T$ and $g.$

\vspace{.2cm}
\indent Next, suppose that $T$ and $g$ are continuous. From Lemma \ref{lm1}, there exists a subset $Z\subseteq X$ such that $g(Z)=g(X)$ and $g:Z\rightarrow X$
is one-one. Now, define $h:g(Z)\rightarrow g(X)$ by
\begin{equation}\label{eq5}
h(gz)=Tz,~~\forall~gz\in g(Z)\;{\rm where }\; z\in Z.
\end{equation}
Since $g$ is one-one and $T(X)\subseteq g(Z)$, $h$ is well defined. As $T$ and $g$ are continuous, so is $h.$ On using the fact $g(Z)=g(X)$
and the conditions $(b)$ and $(e_1)$, we have $T(X)\subseteq g(Z)\cap Y$ and $Y\subseteq g(X)$ which ensures that availability of a sequence $\{x_n\}\subset Z$ satisfying (\ref{eq1}). Take $w\in Z$. On using (\ref{eq4}), (\ref{eq5}) and the continuity of $h$, we get
$$Tw=h(gw)=h(\displaystyle \lim_{n\to \infty} gx_n)=\displaystyle \lim_{n\to \infty}h(gx_n)=\displaystyle \lim_{n\to \infty}Tx_n=gw,$$
so that $w$ is a coincidence point $T$ and $g.$

\vspace{.2cm}
\indent Finally, assume that $\mathcal{R}|_Y$ is $d$-self-closed. Since
$\{gx_n\}$ is an $\mathcal{R}$-preserving in $Y$ and
$gx_n\stackrel{d}{\longrightarrow} gw$, there is a subsequence
$\{gx_{n_k}\}{\rm \; of \;} \{gx_n\}$ with $[gx_{n_k},gw]\in\mathcal{R}|_Y\subseteq \mathcal{R},~\forall k\in \mathbb{N} _0.$
Notice that, $\forall k\in \mathbb{N} _0$, $[gx_{n_k},gw]\in\mathcal{R}$ implies that either
$(gx_{n_k},gw)\in\mathcal{R}$
or, $(gw, gx_{n_k}) \in\mathcal{R}$.
Applying the condition $(d)$ to $(gx_{n_k}, gw)\in\mathcal{R},~\forall~k\in \mathbb{N} _0$, we have
\begin{equation*}
G\big(d(Tx_{n_k}, Tw), d(gx_{n_k},gw), d(gx_{n_k}, Tx_{n_k}), d(gw, Tw), d(gx_{n_k}, Tw), d(gw, Tx_{n_k})\big)\leq  0,
\end{equation*}
or,
\begin{equation*}
G\big(d(gx_{n_k+1}, Tw), d(gx_{n_k}, gw), d(gx_{n_k}, gx_{n_k+1}), d(gw, Tw), d(gx_{n_k}, Tw), d(gw, gx_{n_k+1})\big)\leq  0.
\end{equation*}
Taking liminf as $k\rightarrow \infty;$ using $gx_{n_k}\stackrel{d}{\longrightarrow} gw$, lower semi-continuity of $G$ and continuity of $d$, we obtain
\begin{equation*}
G\big(d(gw, Tw), 0, 0, d(gw, Tw), d(gw, Tw), 0\big)\leq  0.
\end{equation*}
Hence, owing to ($G_{1}$), we obtain $d(gw, Tw)=0$, so that $Tw=gw, \;i.e.,\; w$ is a coincidence point of $T$ and $g$.

Similarly, if $(gw, x_{n_k})\in\mathcal{R},~\forall~k\in \mathbb{N} _0$, then owing to ($G_{2}$), we obtain
$d(Tw, gw)=0$, so that $Tw=gw, \;i.e.,\; w$ is a coincidence point of $T$ and $g$.

 Alternatively, suppose that $(e^\prime)$ holds. As $\{gx_n\}\subset T(X)\subseteq Y$, (in view (\ref{eq1})) we infer that $\{gx_n\}$ is $\mathcal{R}$-preserving Cauchy sequence in $Y$. Since $Y$ is $\mathcal{R}$-complete, there exists $y\in Y$ such that
 \begin{equation}\label{eq6}
\lim_{n\to \infty}gx_n=y~~{\rm and}~\quad \lim_{n\to \infty}Tx_n=y.
\end{equation}
As $\{Tx_n\}$ and $\{gx_n\}$ are $\mathcal{R}$-preserving (due to (\ref{eq1}) and (\ref{eq2})), using the condition $(e^\prime_1)$, we obtain
\begin{equation}\label{eq7}
\lim_{n\to \infty}d(g(Tx_n),T(gx_n))=0.
\end{equation}
Using (\ref{eq2}), (\ref{eq6}) and the condition $(e^\prime_2)$, we have
\begin{equation}\label{eq8}
\lim_{n\to \infty}g(Tx_n)=g(\lim_{n\to \infty}Tx_n)=gy.
\end{equation}
and\begin{equation}\label{eq9}
\lim_{n\to \infty}T(gx_n)=T(\lim_{n\to \infty}gx_n)=Ty.
\end{equation}
In order to prove $Ty=gy,$ applying (\ref{eq7})-(\ref{eq9}) and continuity of $d$, we have
\begin{eqnarray*}
	d(Ty,gy)  &=& d(\lim_{n\to \infty}T(gx_n), \lim_{n\to \infty}g(Tx_n))\\
	&=& \lim_{n\to \infty}d( T(gx_n), g(Tx_n))\\
	&=& 0,
\end{eqnarray*}
yielding thereby $Ty=gy.$
This concludes the proof.
\end{proof}

Now, we present the uniqueness of common fixed point result, which runs as:
\begin{theorem}\label{thm2}
	In addition to the hypotheses of Theorem \ref{thm1}, suppose that the following conditions hold:
	\begin{enumerate}
	\item [${(u_1)}$] $\Upsilon_{g}(\alpha,\beta,T,\mathcal{R}|_{g(X)}^s)$ is non-empty, for each $\alpha,\beta\in T(X),$
	\item [${({\rm u_2})}$] $T$ and $g$ are commute at their coincidence points
	wherein $G$ also enjoys $(G_3)$.
	\end{enumerate}
	Then $T$ and $g$ have a unique common fixed point.
\end{theorem}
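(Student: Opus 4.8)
The plan is to reduce the entire statement to one assertion — that $T$ and $g$ have at most one point of coincidence — and then read off both existence and uniqueness of the common fixed point. Theorem~\ref{thm1} already supplies a coincidence point $w$; set $\overline{x}:=gw=Tw$. Commutativity at coincidence points $({\rm u_2})$ gives $g\overline{x}=g(Tw)=T(gw)=T\overline{x}$, so $\overline{x}$ is again a coincidence point and $g\overline{x}=T\overline{x}$ is a second point of coincidence. If points of coincidence are unique, then $\overline{x}=g\overline{x}=T\overline{x}$, i.e. $\overline{x}$ is a common fixed point; and any two common fixed points, being points of coincidence, must coincide. Thus it suffices to prove uniqueness of the point of coincidence, and this is where $({\rm u_1})$ and $(G_3)$ enter.

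The clean core of that step is the following observation: if $\overline{x}_1=gw_1=Tw_1$ and $\overline{x}_2=gw_2=Tw_2$ are points of coincidence that happen to be \emph{directly} comparable, say $[\overline{x}_1,\overline{x}_2]\in\mathcal{R}$, then applying $(d)$ to the pair $(gw_1,gw_2)$ (or $(gw_2,gw_1)$) and substituting $Tw_1=gw_1=\overline{x}_1$, $Tw_2=gw_2=\overline{x}_2$ collapses the six entries to $\big(\rho,\rho,0,0,\rho,\rho\big)$ with $\rho:=d(\overline{x}_1,\overline{x}_2)$, so that $G(\rho,\rho,0,0,\rho,\rho)\le0$ forces $\rho=0$ by $(G_3)$. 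Comparable points of coincidence are therefore automatically equal. The role of $({\rm u_1})$ is to bridge the general case: given arbitrary $\overline{x}_1,\overline{x}_2\in T(X)$, it provides a $g$-path $\{z_0,\dots,z_l\}\in\Upsilon_g(\overline{x}_1,\overline{x}_2,T,\mathcal{R}|_{g(X)}^s)$, i.e. $gz_0=\overline{x}_1$, $gz_l=\overline{x}_2$, with $[gz_i,gz_{i+1}]\in\mathcal{R}$ and, at interior vertices, $[gz_i,Tz_i]\in\mathcal{R}$.

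To exploit the path I would attach to each vertex a Jungck iterate: using $T(X)\subseteq g(X)$, put $z_i^0:=z_i$ and choose $z_i^{\,n+1}$ with $gz_i^{\,n+1}=Tz_i^{\,n}$, taking the endpoint sequences constant (replace $z_0,z_l$ by $w_1,w_2$). In the orientation in which the path is traversed forward, the interior relation $[gz_i,Tz_i]\in\mathcal{R}$ together with $(T,g)$-closedness (hypothesis $(c)$) makes $\{gz_i^{\,n}\}_n$ an $\mathcal{R}$-preserving sequence; applying $(d)$ to consecutive terms and invoking $(G_1)$ exactly as in Theorem~\ref{thm1} yields $d(gz_i^{\,n},gz_i^{\,n+1})\le\phi^{\,n}\big(d(gz_i^0,gz_i^1)\big)$, so each vertical sequence is an $\mathcal{R}$-preserving Cauchy sequence in the $\mathcal{R}$-complete space $Y$ and converges, $gz_i^{\,n}\to p_i$, with $p_0=\overline{x}_1$, $p_l=\overline{x}_2$. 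The same $(T,g)$-closedness propagates the horizontal links, $[gz_i^{\,n},gz_{i+1}^{\,n}]\in\mathcal{R}$ for every $n$; applying $(d)$ to these, rewriting the six arguments through $gz_i^{\,n+1}=Tz_i^{\,n}$ and the triangle inequality, and letting $n\to\infty$ (the two vertical entries tend to $0$, the other four to $d(p_i,p_{i+1})$) gives, via lower semi-continuity of $G$ and continuity of $d$, the inequality $G\big(b^i,b^i,0,0,b^i,b^i\big)\le0$ with $b^i:=d(p_i,p_{i+1})$. By $(G_3)$ each $b^i=0$, so $p_0=p_1=\dots=p_l$ and hence $\overline{x}_1=\overline{x}_2$.

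The main obstacle is this limiting passage along the path, and it is really an orientation bookkeeping problem. Since the links of an $\mathcal{R}|_{g(X)}^s$-path are only \emph{bracket}-related, a given $[gz_i,Tz_i]$ may be realized in either direction, and $(T,g)$-closedness preserves that direction under iteration; the forward direction reproduces the $(G_1)$ configuration of Theorem~\ref{thm1} verbatim and lets $\mathcal{R}$-completeness of $Y$ apply, whereas the reverse direction must be reconciled with both the contraction extraction and the invocation of completeness, so one needs the auxiliary facts that $(T,g)$-closedness passes to $\mathcal{R}^s$ and that the entries involved are symmetric in $d$. For the horizontal step one must also check that in \emph{either} orientation of the bracket the six arguments share the same limit $(b^i,b^i,0,0,b^i,b^i)$, so that lower semi-continuity of $G$ lands unambiguously in the configuration barred by $(G_3)$. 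Everything else is a direct reprise of the Cauchy estimate and the coincidence-point extraction already carried out in Theorem~\ref{thm1}.
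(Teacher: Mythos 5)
Your overall architecture is the same as the paper's: your opening reduction (commutativity at coincidence points turns uniqueness of the point of coincidence into existence and uniqueness of the common fixed point) is exactly the paper's Steps 2--3, and your path-plus-Jungck-iterates argument is the paper's Step 1. Your one genuine deviation is the limiting step: you let each vertical sequence converge to a limit $p_i$ (via $\mathcal{R}$-completeness of $Y$) and then kill $d(p_i,p_{i+1})$ with lower semi-continuity and $(G_3)$, whereas the paper never introduces the limits $p_i$; it works directly with $t^i_n=d(gw^i_n,gw^{i+1}_n)$, supposes ``$\lim_n t^i_n=t>0$'' and derives $G(t,t,0,0,t,t)\le 0$ by a liminf argument, then sums $t^i_n$ along the path. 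Your variant is actually tidier and more rigorous on this point, since the paper tacitly assumes that $\lim_n t^i_n$ exists, while in your version $t^i_n\to d(p_i,p_{i+1})$ automatically.

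However, the obstacle you yourself flag as ``the main obstacle'' is a genuine gap, and your proposed resolution does not close it. It is true that $(T,g)$-closedness passes to $\mathcal{R}^s$, so the brackets propagate along the iterates; but symmetry of $d$ is beside the point. If an interior bracket is realized in the reverse direction, i.e.\ $(Tw_i,gw_i)\in\mathcal{R}$, then the iterates satisfy $(gw^i_{n+1},gw^i_n)\in\mathcal{R}$, and applying condition $(d)$ with this ordering and then the monotonicity of $G$ in the sixth variable yields
$$G\big(r,s,r,s,0,r+s\big)\leq 0,\qquad r=d(gw^i_{n+1},gw^i_{n+2}),\;\; s=d(gw^i_n,gw^i_{n+1}),$$
which is \emph{not} the configuration governed by $(G_1)$: the third/fourth and fifth/sixth positions are transposed, and nothing in the definition of $\mathcal{G}$ forces $G$ to be symmetric under that transposition. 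Concretely, $G(r_1,\dots,r_6)=r_1-\frac{1}{2}r_2-\frac{1}{5}r_5-\frac{1}{4}r_6$ is lower semi-continuous, decreasing in the fifth and sixth variables, and satisfies $(G_1)$ (with $\phi(t)=\frac{7}{8}t$), $(G_2)$ and $(G_3)$, yet in the reversed configuration it gives only $\frac{3}{4}r-\frac{3}{4}s\le 0$, i.e.\ $r\le s$: the vertical distances need not vanish and the vertical sequence need not be Cauchy. Moreover, even if it were Cauchy, a sequence satisfying $(gw^i_{n+1},gw^i_n)\in\mathcal{R}$ is not $\mathcal{R}$-preserving in the sense of the definition, so $\mathcal{R}$-completeness of $Y$ cannot be invoked for its convergence. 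Thus your proof, as written, is valid only when every interior bracket of the path happens to hold in the forward direction. To be fair, the paper's own proof commits exactly the same sin when it asserts equation (\ref{eq14}) from ``(\ref{eq3}) and $(T,g)$-closedness'' without discussing orientation; but having identified the obstacle, you cannot discharge it by appealing to symmetry of $d$ --- it requires either strengthening $(G_1)$ to cover the transposed configuration or restricting $(u_1)$ to suitably oriented paths.
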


\begin{proof} We divide the proof in three steps.\\
{\bf {Step 1}:} Observe that (in view of Theorem \ref{thm1}) $C(T,g)$ is non-empty. To substantiate the proof, take two arbitrary elements $u,v ~{\rm in}~C(T,g),$ so that
\begin{equation}\label{eq10}
Tu=gu=\overline{x}~\;{\rm and}~\;Tv=gv=\overline{y}
\end{equation}
Now, we are required to show that $\overline{x}=\overline{y}$.

\vspace{.2cm}
In view of the hypothesis $(u_1)$, there exists a $g$-path (say, $\{w_{0},w_1,w_2,...,w_{l}\}$) of length $l$ in $\mathcal{R}|_{g(X)}^s$ from $Tu$ to $Tv$, with
\begin{equation}\label{eq11}
gw_0=Tu,\;gw_{l}=Tv,\;\;[gw_i,gw_{i+1}]\in\mathcal{R}|_{g(X)}\subseteq \mathcal{R},\;{\rm for~each}\;i\in\{0,1,2,\cdots ,l-1\}
\end{equation}
and \begin{equation}\label{eq12}
[gw_{i}, Tw_{i}]\in \mathcal{R}|_{g(X)}\subseteq \mathcal{R},\;{\rm for~each}\;i\in\{1,2,\cdots ,l-1\}.
\end{equation}
Define two constant sequences $$w^0_n=u \;{\rm and}\; w^l_n=v.$$
Then on using (\ref{eq10}), $\textrm{for all}\; n\in \mathbb{N}_0$
$$ Tw^0_n =Tu=\overline{x},\;{\rm and}~ Tw^l_n =Tv=\overline{y}$$
Setting, \begin{equation}\label{eq13}
w^i_0=w_i \;\;{\rm for~each}\;i\in\{0, 1,2,\cdots ,l\},
\end{equation}
we construct joint sequence $\{w^i_n\},$ $i.e.,~ Tw^i_n=gw^i_{n+1}$ corresponding to each $w_i$.
Since $[gw_{0}^{i},gw_{1}^{i}]\in \mathcal{R}$ \big(in view of (\ref{eq11}) and (\ref{eq12})\big), then on using (\ref{eq3}) and $(T,g)$-closedness of $\mathcal{R}$, we get
\begin{equation}\label{eq14}
\lim_{n\to \infty}d(gw_{n}^{i}, gw^{i}_{n+1})=0,\;{\text{for each}}\; i\in\{1,2,\cdots ,l-1\}.
\end{equation}
On using $[gw^i_0,gw^{i+1}_0]\in\mathcal{R}$ \big(in view of (\ref{eq11}) and (\ref{eq13})\big) and $(T,g)$-closedness of $\mathcal{R}$ , we obtain
$$[Tw^i_n,Tw^{i+1}_n]\in\mathcal{R},\;{\rm for~each}\;i\in\{0,1,2,\cdots ,l-1\}\;{\rm and}\;{\rm for~all}\;n\in
\mathbb{N}_0,$$
$$or,~~ [gw^i_n, gw^{i+1}_n]\in\mathcal{R},\;{\rm for~each}\;i\in\{0,1,2,\cdots ,l-1\}\;{\rm and}\;{\rm for~all}\;n\in
\mathbb{N}_0.$$
Define $t^i_n := d(gw^i_n, gw^{i+1}_n), \; {\rm for\; all}\; n\in
\mathbb{N}_0\;{\rm and}\;{\rm for~each}\;i\in\{0,1,2,\cdots ,l-1\}.$
We assert that, $\displaystyle \lim_{n\to \infty} t^i_n=0.$
Suppose on contrary that $\displaystyle \lim_{n\to \infty} t^i_n=t>0$. Since $[gw^i_n, gw^{i+1}_n]\in\mathcal{R}$, either
$(gw^i_n, gw^{i+1}_n)\in\mathcal{R}$ or, $(gw^{i+1}_n, gw^{i}_n)\in\mathcal{R}$. If $(gw^i_n, gw^{i+1}_n)\in\mathcal{R}$,
then applying the condition $(d)$, we have
\begin{equation*}
G\big(d(Tw^i_n, Tw^{i+1}_n), d(gw^i_n, gw^{i+1}_n), d(gw^i_n, Tw^{i}_n), d(gw^{i+1}_n, Tw^{i+1}_n),
d(gw^i_n, Tw^{i+1}_n), d(gw^{i+1}_n, Tw^{i}_n)\big)\leq 0,
\end{equation*}
\begin{equation*}
or,~~G\big(d(gw^i_{n+1}, gw^{i+1}_{n+1}), d(gw^i_n, gw^{i+1}_n), d(gw^i_n, gw^{i}_{n+1}), d(gw^{i+1}_n, gw^{i+1}_{n+1}),
d(gw^i_n, gw^{i+1}_{n+1}), d(gw^{i+1}_n, gw^{i}_{n+1})\big)\leq 0.
\end{equation*}

\noindent
As $d(gw^i_n, gw^{i+1}_{n+1})\leq d(gw^i_n, gw^{i}_{n+1})+d(gw^i_{n+1}, gw^{i+1}_{n+1})$ and $G$ is decreasing in fifth variable,
we get
\begin{multline*}
G\big(d(gw^i_{n+1}, gw^{i+1}_{n+1}), d(gw^i_n, gw^{i+1}_n), d(gw^i_n, gw^{i}_{n+1}), d(gw^{i+1}_n, gw^{i+1}_{n+1}), \\ d(gw^i_n, gw^{i}_{n+1})+d(gw^i_{n+1}, gw^{i+1}_{n+1}), d(gw^{i+1}_n, gw^{i}_{n+1})\big)\leq 0.
\end{multline*}
Taking liminf as $n\rightarrow \infty$  and using $\displaystyle \lim_{n\to \infty} t^i_n=t$ along with the lower semi-continuity of $G$ and (\ref{eq14}), we get
$$ G\big(t, t,0, 0, t, t\big)\leq  0,$$
which is contradiction (in view of ($G_{3}$)) and hence (for each $i\in \{0,1,2,\cdots,l-1\}$)
$$ \displaystyle \lim_{n\to \infty} t^i_n=t=0.$$
Similarly, if $(gw^{i+1}_n, gw^{i}_n)\in\mathcal{R}$, then as earlier, we obtain (for each $i\in \{0,1,2,\cdots,l-1\})$
$$ \displaystyle \lim_{n\to \infty} t^i_n=t=0.$$
Thus, $$\displaystyle \lim_{n\to \infty} t^i_n := \displaystyle \lim_{n\to \infty} d(gw^i_n, gw^{i+1}_n)=0,
\;\;{\rm for~each}\;i\in\{0,1,2,\cdots ,l-1\}.$$
Using (\ref{eq10}), $\displaystyle \lim_{n\to \infty} t^i_n =0$ and triangular inequality, we have
\begin{align}
\nonumber d(\overline{x},\overline{y}) =d(gw^0_n, gw^l_n) &\leq \sum\limits_{i=0}^{l-1}d(gw^i_n, gw^{i+1}_n)\\
\nonumber  & =\sum\limits_{i=0}^{l-1}t^i_n \\
\nonumber & \to 0 \;\;{\rm as}\;\;n \to\infty,
\end{align}
so that $d(\overline{x},\overline{y})=0$ implying thereby $\overline{x}=\overline{y}.$
Therefore, $gx=gy.$

\noindent{\bf{Step 2}:} To prove the existence of common fixed point $T$ and $g$, let $u\in C(T,g), ~i.e., ~Tu=gu$. Since  $T$ and $g$ commute at their coincidence points, we have
\begin{equation}\label{eq15}
T(gu)=g(Tu)=g(gu).
\end{equation}
Put $gu=z$. Then from (\ref{eq15}), $Tz=gz.$ Hence $z$ is also a coincidence point of $T$ and $g$.
From Step 1, we have $$z=gu=gz=Tz,$$ so that $z$ is a common fixed point $T$ and $g.$

\noindent{\bf{Step 3:}} To prove the uniqueness of common fixed point of $T$ and $g$, let us assume that $w$ is another common fixed point of $T$ and $g.$
Then $w\in C(T,g),$ by Step 1, $$w=gw=gz=z.$$
Thus, $T$ and $g$ have a unique common fixed point. This completes the proof.
\end{proof}

If $\mathcal{R}|_{g(X)}$ is complete or $T(X)$ is $(g,\mathcal{R}|_{g(X)}^s)$-directed,
then the following corollary is worth recording.
\begin{corollary}\label{cor1} The conclusions of Theorem \ref{thm2} remain true if the condition $(u_1)$ is replaced  by one of the following conditions
besides retaining the rest of the hypotheses:
\begin{enumerate}
	\item [$(u^\prime_1)$] $\mathcal{R}|_{g(X)}$ is complete;
	\item [$(u^{\prime\prime}_1)$] $T(X)$ is	$(g,\mathcal{R}|_{g(X)}^s)$-directed and $\Delta(T(X),g,\mathcal{R}^s)\subseteq X(T,g,\mathcal{R}^s).$
\end{enumerate}
\end{corollary}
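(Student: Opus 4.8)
The plan is to show that each of the two alternative hypotheses forces the original condition $(u_1)$ of Theorem \ref{thm2}, after which the desired conclusion follows verbatim from that theorem. Hence it suffices to verify, under $(u^\prime_1)$ and under $(u^{\prime\prime}_1)$ respectively, that $\Upsilon_{g}(\alpha,\beta,T,\mathcal{R}|_{g(X)}^s)$ is non-empty for every pair $\alpha,\beta\in T(X)$. Throughout I would lean on condition $(b)$ of Theorem \ref{thm1}, namely $T(X)\subseteq g(X)$, so that any $\alpha,\beta\in T(X)$ can be realised as $\alpha=gw_0$ and $\beta=gw_l$ for suitable $w_0,w_l\in X$, together with the characterisation $(x,y)\in\mathcal{R}^s\Longleftrightarrow[x,y]\in\mathcal{R}$ recorded among the preliminaries.

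First, assume $(u^\prime_1)$, i.e. $\mathcal{R}|_{g(X)}$ is complete. Given $\alpha,\beta\in T(X)\subseteq g(X)$, completeness yields $[\alpha,\beta]\in\mathcal{R}|_{g(X)}$, equivalently $(\alpha,\beta)\in\mathcal{R}|_{g(X)}^s$. I would then pick $w_0,w_1\in X$ with $gw_0=\alpha$ and $gw_1=\beta$, so that $\{w_0,w_1\}$ is a $g$-path of length $1$ joining $\alpha$ to $\beta$ in $\mathcal{R}|_{g(X)}^s$. The extra requirement $[gw_i,Tw_i]\in\mathcal{R}$ is imposed only for indices $i\in\{1,\dots,l-1\}$, which is empty when $l=1$, so it holds vacuously. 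Thus this path lies in $\Upsilon_{g}(\alpha,\beta,T,\mathcal{R}|_{g(X)}^s)$, establishing $(u_1)$.

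Next, assume $(u^{\prime\prime}_1)$ and fix $\alpha,\beta\in T(X)$. Since $T(X)$ is $(g,\mathcal{R}|_{g(X)}^s)$-directed, there is $z\in X$ with $(\alpha,gz)\in\mathcal{R}|_{g(X)}^s$ and $(\beta,gz)\in\mathcal{R}|_{g(X)}^s$; in particular $z\in\Delta(T(X),g,\mathcal{R}^s)$ because $\mathcal{R}|_{g(X)}^s\subseteq\mathcal{R}^s$. The inclusion $\Delta(T(X),g,\mathcal{R}^s)\subseteq X(T,g,\mathcal{R}^s)$ then forces $(gz,Tz)\in\mathcal{R}^s$, that is $[gz,Tz]\in\mathcal{R}$. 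Choosing $w_0,w_2\in X$ with $gw_0=\alpha$, $gw_2=\beta$ and setting $w_1=z$, the symmetry of $\mathcal{R}|_{g(X)}^s$ gives $(gw_0,gw_1)=(\alpha,gz)\in\mathcal{R}|_{g(X)}^s$ and $(gw_1,gw_2)=(gz,\beta)\in\mathcal{R}|_{g(X)}^s$, so $\{w_0,w_1,w_2\}$ is a $g$-path of length $2$. The intermediate condition is now required only at $i=1$, where $[gw_1,Tw_1]=[gz,Tz]\in\mathcal{R}$ has just been secured, whence this path belongs to $\Upsilon_{g}(\alpha,\beta,T,\mathcal{R}|_{g(X)}^s)$, again giving $(u_1)$.

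In both cases $(u_1)$ holds, and the full conclusion of Theorem \ref{thm2} applies unchanged. The only genuine subtlety, and the step I would watch most carefully, is the bookkeeping of the auxiliary path condition $[gw_i,Tw_i]\in\mathcal{R}$: under $(u^\prime_1)$ it must be recognised as vacuous for a length-one path, whereas under $(u^{\prime\prime}_1)$ it is precisely what the hypothesis $\Delta(T(X),g,\mathcal{R}^s)\subseteq X(T,g,\mathcal{R}^s)$ is designed to supply at the single interior vertex of the length-two path. Everything else reduces to the elementary observation that $T(X)\subseteq g(X)$ lets one pre-image the endpoints under $g$.
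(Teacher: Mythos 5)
Your proof is correct and takes essentially the same route as the paper's: under $(u^\prime_1)$ you reduce $\alpha,\beta$ to $g$-pre-images and produce a $g$-path of length one, and under $(u^{\prime\prime}_1)$ you insert the point $z$ given by directedness as the single interior vertex of a length-two path, with the inclusion $\Delta(T(X),g,\mathcal{R}^s)\subseteq X(T,g,\mathcal{R}^s)$ supplying $[gz,Tz]\in\mathcal{R}$ there. If anything, your bookkeeping is slightly more careful than the paper's, which in this step silently replaces $\mathcal{R}^s$ by $\mathcal{R}|_{g(X)}^s$ inside $\Delta$ and $X(T,g,\cdot)$; your observation that $\mathcal{R}|_{g(X)}^s\subseteq\mathcal{R}^s$ makes that passage legitimate.
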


\begin{proof} Suppose that the condition $(u^\prime_1)$ holds. Take an arbitrary pair of points $\alpha,~\beta$ in $T(X)$. Owing to the hypothesis, $T(X)\subseteq g(X)$, there exist $x,y\in X$ such that $\alpha=gx,~ \beta=gy$.  As $\mathcal{R}|_{g(X)}$ is complete, $[gx,gy]\in \mathcal{R}|_{g(X)}$ which shows that $\{x,y\}$ is a $g$-path
of length 1 from $\alpha$ to $\beta$ in $\mathcal{R}|_{g(X)}^s$,
so that
$\Upsilon_{g}(\alpha,\beta,T,\mathcal{R}|_{g(X)}^s)$ is non-empty. Now, on the lines of Theorem \ref{thm2}, result follows.

\vspace{.2cm} \indent
Alternatively, assume that $(u^{\prime\prime}_1)$ holds, then for any $\alpha,~\beta$ in $T(X)$, there is $z$ in $X$ such that $[\alpha,gz]\in\mathcal{R}$ and
$[\beta,gz]\in\mathcal{R}$. As $T(X)\subseteq g(X), ~\exists x,y\in X$  so that $\alpha=gx,~\beta=gy$ and hence $\{x,z,y\}$ is a $g$-path of length 2 joining $\alpha$ to $\beta$ in $\mathcal{R}|_{g(X)}^s$. As $z\in \Delta\big(T(X),g,\mathcal{R}|_{g(X)}^s\big)\subseteq X\big(T,g,\mathcal{R}|_{g(X)}^s\big),$ therefore $[gz,Tz]\in \mathcal{R}|_{g(X)}.$
Hence, for each $\alpha, ~\beta$ in $T(X)$, $\Upsilon_{g}\big(\alpha,\beta,T,\mathcal{R}|_{g(X)}^s\big)$ is non-empty and hence in view of
Theorem \ref{thm2} result follows.
\end{proof}

On setting $g=I$ (the identity mapping on $X$), Theorems \ref{thm1} and \ref{thm2} deduces the following:
\begin{corollary}\label{cor2}
	Let $(X,d)$ be a metric space equipped with a binary relation $\mathcal{R}$ and $Y$ an $\mathcal{R}$-complete subspace of $X$. Let $T$ be a self-mappings on
	$X$. Assume that the following conditions hold:
	\begin{enumerate}
		\item [$(a)$] $\exists$ $x_0 \in X$ such that $(x_0, Tx_0)\in\mathcal{R},$
		\item [$(b)$]  $T(X)\subseteq Y\subseteq X,$
		\item [$(c)$] $\mathcal{R}$ is $T$-closed,
		\item [$(d)$] there exists an implicit relation $G\in \mathcal{G}$ such that $\big({for~ all}~ x,y\in X\;\textrm{with}\; (x,y)\in \mathcal{R}\big)$
		$$G\big(d(Tx,Ty), d(x,y), d(x,Tx), d(y,Ty), d(x,Ty), d(y,Tx)\big)\leq  0,$$
		\item [{$(e)$}] either $T$ is $\mathcal{R}$-continuous or $\mathcal{R}|_Y$ is $d$-self-closed.
	\end{enumerate}
Then $T$ has a fixed point . Moreover, if
		\begin{enumerate}
		\item [${(f)}$] $\Upsilon(\alpha,\beta, T,\mathcal{R}^s)	\;{\rm is~ non\text{-}empty~ ~(for~ each}\; \alpha, \beta \in T(X)),$ wherein $G$ also enjoys $(G_3)$.
	\end{enumerate}
	Then $T$ has a unique fixed point.	
\end{corollary}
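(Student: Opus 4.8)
The plan is to derive this corollary directly from Theorems~\ref{thm1} and~\ref{thm2} by specializing the auxiliary map $g$ to the identity mapping $I$ on $X$, and then checking that each hypothesis of the corollary is a faithful translation of the corresponding hypothesis of those theorems under this substitution. The crucial preliminary observation is that with $g=I$ we have $g(X)=X$, so that a coincidence point of $T$ and $g$ (i.e.\ a point $w$ with $Tw=gw$) is exactly a fixed point of $T$ (namely $Tw=w$), and moreover $\mathcal{R}|_{g(X)}=\mathcal{R}$ and $\mathcal{R}|_{g(X)}^s=\mathcal{R}^s$.

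First I would settle the existence part via Theorem~\ref{thm1}. Conditions $(a)$, $(c)$ and $(d)$ of the corollary are obtained verbatim from conditions $(a)$, $(c)$ and $(d)$ of Theorem~\ref{thm1} upon replacing each occurrence of $gx$ by $x$; in particular $(T,g)$-closedness collapses to $T$-closedness, as recorded after Definition~\ref{3.6}. Since $g(X)=X$, hypothesis $(b)$ of the corollary ($T(X)\subseteq Y\subseteq X$) yields $T(X)\subseteq Y\cap g(X)$, which is condition $(b)$ of Theorem~\ref{thm1}, while condition $(e_1)$ of that theorem, namely $Y\subseteq g(X)$, holds automatically. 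For the control clause $(e_2)$, note that by Remark~\ref{rmk2} continuity of $T$ implies its $\mathcal{R}$-continuity, that $(g,\mathcal{R})$-continuity degenerates to $\mathcal{R}$-continuity by the observation following Definition~\ref{3.10}, and that $g=I$ is trivially continuous; hence all the alternatives inside $(e_2)$ funnel into the single disjunction ``$T$ is $\mathcal{R}$-continuous or $\mathcal{R}|_Y$ is $d$-self-closed,'' which is exactly condition $(e)$ of the corollary. Thus every hypothesis of Theorem~\ref{thm1} is met, and the theorem furnishes a coincidence point of $T$ and $g$, that is, a fixed point of $T$.

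For the uniqueness part I would invoke Theorem~\ref{thm2}. Its hypothesis $(\mathrm{u}_2)$, that $T$ and $g$ commute at their coincidence points, is vacuously satisfied because $g=I$ commutes with $T$ at every point of $X$, and $G$ enjoys $(G_3)$ by assumption. Its hypothesis $(u_1)$ requires $\Upsilon_{g}(\alpha,\beta,T,\mathcal{R}|_{g(X)}^s)$ to be non-empty for each $\alpha,\beta\in T(X)$; since $g(X)=X$ gives $\mathcal{R}|_{g(X)}^s=\mathcal{R}^s$, and since $\Upsilon_{g}$ coincides with $\Upsilon$ when $g=I$ (as noted following Definition~\ref{3.16}), this demand is precisely condition $(f)$ of the corollary. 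Therefore Theorem~\ref{thm2} applies and produces a unique common fixed point of $T$ and $g$, which is the unique fixed point of $T$.

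I expect no genuine obstacle here, as the argument is a routine specialization rather than a fresh proof; the only point demanding care is the bookkeeping, namely confirming that the several continuity hypotheses bundled in $(e_2)$ all degenerate to the lone clause $(e)$ via Remark~\ref{rmk2} and the post-Definition~\ref{3.10} observation, and that every restricted relation $\mathcal{R}|_{g(X)}$ reverts to $\mathcal{R}$ once $g(X)=X$.
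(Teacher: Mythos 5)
Your proposal is correct and is precisely the paper's own (implicit) argument: the paper obtains Corollary~\ref{cor2} simply by setting $g=I$ in Theorems~\ref{thm1} and~\ref{thm2}, and your write-up just makes the routine bookkeeping explicit (automatic satisfaction of $Y\subseteq g(X)$ and of commutativity at coincidence points, collapse of $(T,g)$-closedness and $(g,\mathcal{R})$-continuity to their $g=I$ versions, and $\mathcal{R}|_{g(X)}=\mathcal{R}$). No gaps; nothing further is needed.
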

\begin{remark}\label{rmk3}
	Corollary \ref{cor2} remains an improved version of Theorem 2 due to Ahmadullah et al. \cite{AhmadullahAI} as the whole space $X$ is not required to be  $\mathcal{R}$-complete whereas the function governing the implicit relation is taken to be lower semi-continuity (as opposed to continuity). Interesting, the improved implicit relation also covers some nonlinear contractions as well.
\end{remark}

From Theorems \ref{thm1} and \ref{thm2}, we can deduce a multitude of corollaries which are embodied in the following:

\begin{corollary}\label{cor3} The conclusions of Theorems \ref{thm1} and \ref{thm2} remain true if
	the implicit relation $(d)$ is replaced by one of the following besides retaining the rest of the hypotheses (for all $x, y\in X $ with $(gx,gy)\in \mathcal{R}$):
	\begin{eqnarray}
	\hspace{1cm} d(Tx,Ty) &\leq& k d(gx,gy)\;\textrm{where}\; k \in [0, 1);\\
\nonumber	d(Tx,Ty) &\leq& \varphi\big( d(gx,gy)\big),\textrm{where}\; \varphi: \mathbb{R}_+ \to \mathbb{R}_+~ {\text{is ~an~upper ~semi-continuous ~mapping~such ~that}}\\
	&&\varphi(t)<t,~ \forall t>0;\\
	d(Tx,Ty) &\leq& k [d(gx, Tx) + d(gy, Ty)],\;\textrm{where}\; k \in [0, 1/2);\\
	d(Tx,Ty) &\leq& k [d(gx, Ty) + d(gy, Tx)], \;\textrm{where}\; k \in [0, 1/2);\\
	\nonumber	d(Tx,Ty) &\leq& k\;max\Big\{d(gx, gy), \frac{d(gx, Tx)+d(gy, Ty)}{2}, \frac{d(gx, Ty) +d(gy, Tx)}{2}\Big\},\\
	& &\textrm{where}\; k\in [0, 1);\\
	d(Tx,Ty) &\leq& k\; max\{d(gx, Tx), d(gy, Ty)\},\;\textrm{where}\; k \in [0, 1);\\
	\nonumber	d(Tx,Ty) &\leq& a_1d(gx, gy) +a_2[d(gx, Tx)+d(gy, Ty)]+a_3[d(gx, Ty) +d(gy, Tx)],\;\\
	& & \textrm{where}\; a_1,a_2,a_3\in [0, 1) \; \textrm{and} \; a_1+2a_2+2a_3<1;
\end{eqnarray}
\begin{eqnarray}
	\nonumber	d(Tx,Ty) &\leq& k\;max\Big\{d(gx, gy), \frac{d(gx, Tx) + d(gy, Ty)}{2}, d(gx, Ty), d(gy, Tx)\Big\}, \;\\
	&& \textrm{where}\; k\in [0, 1);\\
	\nonumber	d(Tx,Ty) &\leq& k\;d(gx, gy) + L\; min\{d(gx, Tx),d(gy, Ty), d(gx, Ty), d(gy, Tx)\},\\
	&& \textrm{where}\; k\in [0, 1)\;\textrm{and} \; L\geq 0;\\
	\nonumber	d(Tx,Ty) &\leq& a_1 d(gx,gy)+a_2 d(gx,Tx)+a_3 d(gy,Ty)+a_4[d(gx,Ty)+ d(gy,Tx)],\;\\
	&& {\rm where}\; a_1, a_2, a_3, a_4 \geq 0;~ a_1+ a_2+ a_3+2 a_4< 1;\\
	\nonumber	d(Tx,Ty) &\leq& k\;max\Big\{d(gx, gy), d(gx, Tx), d(gy, Ty), \frac{d(gx, Ty) + d(gx, Ty)}{2}\Big\}\\
	&& +  L\; min\{d(gx, Tx),d(gy, Ty), d(gx, Ty), d(gy, Tx)\}, \;\\
	\nonumber	&&\textrm{where}\; k\in [0, 1)\;\textrm{and} \; L\geq 0;\\
	\nonumber	d(Tx, Ty) &\leq& k\;max\{d(gx, gy), d(gx, Tx), d(gy, Ty), d(gx, Ty), d(gy, Tx)\}, \;\\
	&& \textrm{where}\; k\in [0, 1/2);\\
	\nonumber	d(Tx, Ty) &\leq& a_1 d(gx, gy)+a_2 d(gx,Tx)+a_2 d(gy, Ty)+a_4d(gx, Ty)+a_5 d(gy,Tx),\;\\
	&& {\rm where}~ a_i's>0 ~({\rm for}~i=1,2,3,4,5); \;{\rm and~ sum~ of~ them~ is ~strictly~} {\rm less~ than~} 1;\\
	d(Tx,Ty) &\leq& k\;max \Big\{ d(gx,gy),d(gx,Tx), d(gy,Ty), \displaystyle\frac{d(gx,Ty)}{2},\displaystyle\frac{d(gy,Tx)}{2}\Big\}, {\rm where}~ k\in [0,1);\\
	\nonumber	d(Tx,Ty) &\leq& k\;max \{d(gx,gy), d(gx,Tx), d(gy,Ty)\}+(1-k)[ad(gx,Ty)+ bd(gy,Tx)],\\
	&&{\rm where}\; k\in[0,1) \;{\rm and} \; 0\leq a,b < {1/2};\\
	\nonumber	d^2(Tx,Ty) &\leq& d(Tx,Ty)\big[a_1d(gx,gy)+a_2d(gx,Tx)+a_3d(gy,Ty)\big]+a_4d(gx,Ty)d(gy,Tx),\\
	&&{\rm where}~a_1>0;~ a_2,a_3,a_4\geq 0; a_1+a_2+a_3<1~{\rm and}~ a_1+a_4<1;\\
	d(Tx,Ty) &\leq& {\begin{cases}k d(gx,gy)\displaystyle{\frac{d(gx,Ty) + d(gy,Tx)}{d(Tx,Ty)+ d(gx,gy)}},~\hspace{.3cm}{\rm{if}}~~(Tx,Ty)+d(gx,gy)\not=0;\cr
		0\,~\hspace{4.7cm}{\rm{if}}~~(Tx,Ty)+d(gx,gy)=0,\cr\end{cases}}\\
	\nonumber	&&\; \textrm{where}\; k\in [0, 1);\\
	\nonumber	d^2(Tx,Ty) &\leq& a_1max\{d^2(gx,gy),d^2(gx,Tx),d^2(gy,Ty)\}\\
	&&+a_2max\{d(gx,Tx)d(gx,Ty),d(gy,Ty)d(gy,Tx)\}+c_3d(gx,Ty)d(gy,Tx),\\
	\nonumber	&& {\rm where}~ a_1>0, ~ a_2,a_3\geq 0,  ~a_1+2a_2<1 ~{\rm and}~ a_1+a_3<1;\\
	\nonumber	d^3(Tx,Ty)&\leq& k\big(d^3(gx,gy)+d^3(gx,Tx)+d^3(gy,Ty)+d^3(gx,Ty)+d^3(gy,Tx)\big),\\
	&&{\rm where} ~k\in[0,1);\\
	\nonumber	d(Tx,Ty) &\leq& {\begin{cases}a_1\displaystyle\frac{d(gx,gy)d(gy,Ty)}{d(gx,gy)+d(gy,Ty)}+a_2\frac{d(gx,Tx)d(gy,Tx)}{d(gx,Ty)+d(gy,Tx)+1},\\
		{~\hspace{6.7cm}\rm{if}}~~d(gx,gy)+d(gy,Ty)\not=0;\cr
		0\,~\hspace{6.5cm}{\rm{if}}~~d(gx,gy)+d(gy,Ty)=0,\cr\end{cases}}\\
	&&{\rm where}~ a_1,a_2> 0~{\rm and}~ a_1<2.
	\end{eqnarray}
\end{corollary}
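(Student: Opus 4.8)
The plan is to recognize that this corollary is not an independent result but a direct specialization of Theorems~\ref{thm1} and \ref{thm2}: each displayed contractive inequality is precisely the assertion $G\big(d(Tx,Ty),d(gx,gy),d(gx,Tx),d(gy,Ty),d(gx,Ty),d(gy,Tx)\big)\leq 0$ for a suitable choice of $G$. Concretely, the listed inequalities are in one-to-one correspondence with the implicit relations \textbf{I}--\textbf{XVI} exhibited in Subsection~2.1; transposing every term except $d(Tx,Ty)$ (or $d^2(Tx,Ty)$, $d^3(Tx,Ty)$) to the left-hand side turns each inequality into $G(\cdots)\leq 0$ with $G$ the matching function. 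Thus, once we know that each such $G$ belongs to $\mathcal{G}$ (and additionally satisfies $(G_3)$ whenever uniqueness is sought), hypothesis $(d)$ of Theorem~\ref{thm1} (resp. the extra hypothesis of Theorem~\ref{thm2}) is met, and the conclusions follow verbatim.

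First I would, for each listed condition, write down the associated $G$ and confirm membership in $\mathcal{G}$, namely that $G$ is lower semi-continuous, is decreasing in the fifth and sixth variables, and satisfies $(G_1)$ and $(G_2)$. Property $(G_1)$ is checked by evaluating $G(r,s,s,r,r+s,0)\leq 0$ and solving for $r$ so as to exhibit a $\phi\in\Phi$ with $r\leq\phi(s)$; property $(G_2)$ is checked by computing $G(r,0,r,0,0,r)$ and verifying positivity for $r>0$; for the uniqueness clause I would further evaluate $G(r,r,0,0,r,r)$ to confirm $(G_3)$. For the linear entries this is immediate: for \textbf{I} one gets $G(r,s,s,r,r+s,0)=r-ks\leq 0$, hence $\phi(t)=kt$, which is increasing with $\phi(0)=0$ and $\sum_n\phi^n(t)=\sum_n k^n t<\infty$, while $G(r,0,r,0,0,r)=r>0$ and $G(r,r,0,0,r,r)=(1-k)r>0$. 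The Kannan-, Chatterjea-, Reich- and max-type conditions are dispatched identically, the admissibility constraints on the constants (such as $a_1+2a_2+2a_3<1$) being exactly what makes the extracted $\phi$ a contraction lying in $\Phi$ and what forces $(G_2)$, $(G_3)$ to hold.

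The hard part will be the genuinely nonlinear entries: the quadratic forms \textbf{XII}, \textbf{XIV}, the cubic form \textbf{XV}, and the piecewise rational forms \textbf{XIII}, \textbf{XVI}. There the extraction of $\phi$ from $G(r,s,s,r,r+s,0)\leq 0$ is no longer a one-line substitution but requires a case split according to whether $r\leq s$ and the taking of a square or cube root; for \textbf{XIV} one is led to $r^2\leq(a_1+2a_2)s^2$, whence $\phi(t)=\sqrt{a_1+2a_2}\,t$, and for \textbf{XV} to $r^3\leq 11k\,s^3$, whence $\phi(t)=(11k)^{1/3}t$, the hypotheses $a_1+2a_2<1$ and $k<1/11$ being precisely what render these sub-linear. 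For the rational relations the substitution collapses the fractions cleanly (e.g. \textbf{XIII} reduces to $r\leq ks$ and \textbf{XVI} to $r\leq(a_1-1)s$ with $a_1<2$), so the only delicate point there is verifying lower semi-continuity across the set where a denominator vanishes.

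Since all of these verifications are exactly the content asserted for \textbf{I}--\textbf{XVI} in Subsection~2.1, the proof itself need only state the correspondence between each inequality and its implicit relation, invoke that each associated $G$ lies in $\mathcal{G}$ (and satisfies $(G_3)$ in the uniqueness case), and then appeal directly to Theorems~\ref{thm1} and \ref{thm2}.
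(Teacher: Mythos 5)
Your proposal is correct and essentially identical to the paper's own proof, which consists of a single sentence asserting that the corollary follows from Theorems \ref{thm1} and \ref{thm2} in view of the implicit relations I--XVI; the $(G_1)$--$(G_3)$ verifications you sketch (including the case splits and root extractions for the quadratic, cubic and rational relations) are precisely the content already asserted for those examples in Subsection 2.1. The only slight looseness, shared with and inherited from the paper, is that the twenty displayed conditions are not literally in bijection with the sixteen relations: a few of them (for instance the fifth and sixth, whose right-hand sides take a $\max$ over fewer or averaged terms) are handled by majorizing them by relation VII with $L=0$ rather than by an exact match, but this does not change the approach.
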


\begin{proof}
	The proof of Corollary \ref{cor2} follows from Theorems \ref{thm1} and \ref{thm2} in view of examples (of implicit relation) $I-XVI$.
\end{proof}
\begin{remark} Theorem \ref{thm2} corresponding to condition (16) and (17), remains true if we replace the condition $(u_1)$ by the following relatively weaker condition besides retaining the rest of the hypotheses:
$$(\tilde{u}_1): ~\Upsilon_{g}(\alpha,\beta,\mathcal{R}^s)
\;{\rm is~ non{\text{-}}empty,~ ~for~ each}\; \alpha, \beta\in T(X).$$
\end{remark}

\section*{Some Consequences}
\indent Now, we mention some special cases corresponding to different type of binary relation.
\subsection{Results in abstract spaces}
Setting $\mathcal{R}=X\times X ~(i.e.,$ the universal relation), in Theorem \ref{thm1}, we deduce the following:
\begin{corollary}\label{cor4}
	 Let $T$ and $g$ be two self-mappings defined on a metric space $(X,d)$ and $Y$ complete subspace of $X$. Assume that the following conditions hold:
	\begin{enumerate}
		\item [$(a)$]  $T(X)\subseteq Y \cap g(X),$
		\item [$(b)$] there exists an implicit relation $G\in \mathcal{G}$ such that $\big({for~ all}~ x,y\in X\;\big)$
		$$G\big(d(Tx,Ty), d(gx,gy), d(gx,Tx), d(gy,Ty), d(gx,Ty), d(gy,Tx)\big)\leq  0,$$
		\item [{$(e)$}] $Y \subseteq g(X),$
	\end{enumerate}
	or, alternatively
	\begin{enumerate}
		\item [{$(e^\prime)$}] \begin{enumerate}
			\item [$(e_1^{\prime})$]  $T$ and $g$ are compatible,
			\item [$(e_2^{\prime})$]  $T$ and $g$ are continuous.
		\end{enumerate}
	\end{enumerate}
	Then $T$ and $g$ have a coincidence point.	
\end{corollary}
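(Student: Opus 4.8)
The plan is to obtain this corollary as a direct specialization of Theorem \ref{thm1}, taking $\mathcal{R}$ to be the universal relation $X\times X$ and then checking that each hypothesis of the theorem is automatically satisfied. First I would invoke Remark \ref{rmk1} to identify $\mathcal{R}$-completeness with ordinary completeness, so that the assumption that $Y$ is a complete subspace is exactly the requirement that $Y$ be an $\mathcal{R}$-complete subspace demanded by Theorem \ref{thm1}.

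Next I would verify conditions $(a)$--$(d)$ of Theorem \ref{thm1} in turn. Since under the universal relation every pair of points is related, condition $(a)$ of the theorem (the existence of $x_0$ with $(gx_0,Tx_0)\in\mathcal{R}$) and condition $(c)$ (the $(T,g)$-closedness of $\mathcal{R}$) hold trivially, while condition $(b)$ coincides verbatim with hypothesis $(a)$ of the corollary. For the contractive condition $(d)$, the key observation is that $(gx,gy)\in\mathcal{R}$ is automatic for all $x,y\in X$, so the inequality assumed for \emph{all} $x,y\in X$ in hypothesis $(b)$ of the corollary is precisely condition $(d)$ of the theorem, now placing no restriction on the admissible pairs.

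It then remains to handle the two alternative regularity branches. In the first branch, hypothesis $(e)$ of the corollary supplies $(e_1)$ ($Y\subseteq g(X)$) directly; to supply $(e_2)$ I would note that $\mathcal{R}|_Y=Y\times Y$ is trivially $d$-self-closed, because for any $\mathcal{R}$-preserving sequence in $Y$ converging to a point of $Y$ the relation $[x_{n_k},x]\in\mathcal{R}|_Y$ holds for every index. In the alternative branch $(e^\prime)$, I would use Remark \ref{rmk2} to identify $\mathcal{R}$-continuity with continuity, and observe that since every sequence is $\mathcal{R}$-preserving under $\mathcal{R}=X\times X$, the notion of $\mathcal{R}$-compatibility collapses to ordinary compatibility; hence $(e_1^\prime)$ and $(e_2^\prime)$ of the corollary yield exactly $(e_1^\prime)$ and $(e_2^\prime)$ of Theorem \ref{thm1}. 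With all hypotheses verified in each branch, the conclusion of Theorem \ref{thm1} furnishes a coincidence point of $T$ and $g$.

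There is no genuine analytic difficulty here; the step requiring the most care is the reconciliation of the relation-theoretic notions with their classical counterparts. One must check that the $\mathcal{R}$-preserving qualifiers built into the definitions of $d$-self-closedness and of $\mathcal{R}$-compatibility become vacuous when $\mathcal{R}=X\times X$, so that the relation-free hypotheses of the corollary are in fact strong enough to activate the machinery of Theorem \ref{thm1}.
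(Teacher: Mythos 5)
Your proposal is correct and is exactly the paper's route: the paper obtains Corollary~\ref{cor4} precisely by setting $\mathcal{R}=X\times X$ in Theorem~\ref{thm1}, with the hypotheses matching as you describe. Your careful verification that $d$-self-closedness of $\mathcal{R}|_Y$ and $\mathcal{R}$-compatibility become vacuous under the universal relation simply fills in details the paper leaves implicit.
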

\begin{corollary}\label{cor5}
	In addition to the hypotheses of Corollary \ref{cor4}, if
	the mappings $T$ and $g$ commute at their coincidence point and the implicit relation $G$ also enjoys $(G_3)$, then $T$ and $g$ have a unique common fixed point.
\end{corollary}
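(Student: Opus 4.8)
The plan is to recognise that Corollary \ref{cor5} is precisely the universal-relation specialisation of Theorem \ref{thm2}, so the whole task reduces to checking that the two supplementary hypotheses $(u_1)$ and $(u_2)$ of that theorem are automatically in force once $\mathcal{R}=X\times X$. First I would invoke Corollary \ref{cor4} to guarantee that $C(T,g)\neq\emptyset$; a coincidence point is thus already in hand, and what remains is only to establish uniqueness of the resulting common fixed point.

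Condition $(u_2)$ is handed to us directly: the assumption that $T$ and $g$ commute at their coincidence point, together with the assumption that $G$ additionally satisfies $(G_3)$, is exactly $(u_2)$. The substance of the argument therefore lies in verifying $(u_1)$, namely that $\Upsilon_g(\alpha,\beta,T,\mathcal{R}|_{g(X)}^s)$ is non-empty for every $\alpha,\beta\in T(X)$. Since $\mathcal{R}=X\times X$ is the universal relation, its restriction $\mathcal{R}|_{g(X)}$ is complete, in the sense that every pair of points of $g(X)$ is comparable. Hence the hypothesis $(u_1^\prime)$ of Corollary \ref{cor1} holds, and the verification given there applies verbatim.

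Concretely, for arbitrary $\alpha,\beta\in T(X)\subseteq g(X)$ I would pick $x,y\in X$ with $\alpha=gx$ and $\beta=gy$; since $[gx,gy]\in\mathcal{R}|_{g(X)}$ holds automatically under the universal relation, the two-term list $\{x,y\}$ is a $g$-path of length $1$ joining $\alpha$ to $\beta$ in $\mathcal{R}|_{g(X)}^s$. A path of length $1$ has no interior vertices, so the auxiliary requirement $[gw_i,Tw_i]\in\mathcal{R}$ for interior indices is vacuously satisfied. Consequently $\Upsilon_g(\alpha,\beta,T,\mathcal{R}|_{g(X)}^s)$ is non-empty, which establishes $(u_1)$.

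With both $(u_1)$ and $(u_2)$ verified, all hypotheses of Theorem \ref{thm2} are met, and I would simply apply that theorem to conclude that $T$ and $g$ possess a unique common fixed point. I do not expect a genuine obstacle here: the entire content is the observation that the universal relation trivialises the $g$-path hypothesis, so the only care needed is to confirm that the length-$1$ path indeed lies in the symmetric restriction $\mathcal{R}|_{g(X)}^s$ and that no interior comparability condition survives.
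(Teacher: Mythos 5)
Your proof is correct and follows exactly the route the paper intends: Corollary \ref{cor5} is obtained by specializing Theorem \ref{thm2} to the universal relation, where $(u_2)$ is assumed outright and $(u_1)$ holds because $\mathcal{R}|_{g(X)}$ is complete, so the length-$1$ $g$-path argument of Corollary \ref{cor1} (condition $(u_1^\prime)$) applies with the interior-vertex condition vacuous. Nothing is missing; this matches the paper's (implicit) derivation.
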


 Corollaries \ref{cor4} and \ref{cor5} corresponding to the condition (16) are infact  sharpened versions of the well known coincidence
theorems of Goebel \cite{Gobel1968} and Jungck \cite{Jungck1986}.

\subsection{Results in ordered metric spaces via increasing mappings}
\begin{definition} \cite{Ccru2008}
Let $T$ and $g$ be two self-mappings on $X$. Then the mapping $T$ is said to be $g$-increasing if $Tx\preceq Ty,$ whenever $gx\preceq gy$ for all
$x,y\in X$.
\end{definition}

\begin{remark}\label{rmk4.1}
$T$ is $g$-increasing if and only if $`\preceq$' is $(T,g)$-closed.
\end{remark}
\begin{definition} \cite{AKI2014}. An ordered metric space
$(X,d,\preceq)$ enjoys {\it ICU}\;(increasing-convergence-upper
bound) property if every increasing convergence sequence $\{x_n\}$
in $X$ (with $x_n\stackrel{d}{\longrightarrow} x$), is bounded
above by its limit $(i.e., x_n\preceq x\;\;\forall~ n\in \mathbb{N}_{0}).$
\end{definition}

\begin{remark}\label{rmk4.2}
	If $(X,d,\preceq)$ enjoys ICU property then $`\preceq$' is $d$-self-closed.
\end{remark}
\begin{definition} \cite{AKI2015}\label{4.3}
	Let $(X,d,\preceq)$ be an ordered metric space. Then a mapping $T:X\to X$ is said to be $(g,\overline{O})$-continuous \big(resp. $(g,\underline{O})$-continuous, $(g,{O})$-continuous\big) at $x\in X$, if $Tx_n \stackrel{d}{\longrightarrow} Tu$  whenever every increasing \big(resp. decreasing, monotone\big) sequence $\{gx_n\}$ convergence to $\{gu\}$ \big(for any sequence $\{x_n\}\subset X$ and any $u\in X$\big).
	
	As usual, $T$ is said to be $(g,\overline{O})$-continuous \big(resp. $(g,\underline{O})$-continuous or $(g,O)$-continuous\big) on $X$ if it is $\overline{O}$-continuous \big(resp. $(g,\underline{O})$-continuous or $(g,O)$-continuous\big) at every point in $X$.
\end{definition}

Observe that if $g=I$ (the identity mapping on $X$), then definition of $(g,\overline{O})$-continuity reduces to $\overline{O}$-continuity and similarly others.
\begin{definition} \cite{AKI2015}
	An ordered metric space $(X,d,\preceq)$ is said be $\overline{O}$-complete (resp.  $\underline{O}$-complete,  ${O}$-complete), if increasing (resp. decreasing, monotone) Cauchy sequence converges to a point of $X$.
\end{definition}
\begin{definition} \cite{AKI2015}
	 Let $T$ and $g$ be self-mappings
	defined on an ordered metric space $(X,d, \preceq)$. Then $T$ and $g$ are said to be $\overline{O}$-compatible (resp.  $\underline{O}$-compatible,  ${O}$-compatible), if $\displaystyle\lim_{n\to \infty}d(T(gx_n), g(Tx_n))=0$  whenever $Tx_n\uparrow u ~(\text{resp.}~ Tx_n\downarrow u,~ Tx_n\updownarrow u)$ and $gx_n\uparrow u ~(\text{resp.}~ gx_n\downarrow u,~ gx_n\updownarrow u)$  (for any sequence $\{x_n\}\subset X$ and any $u\in X$).
\end{definition}

In view of Remarks \ref{rmk4.1} and \ref{rmk4.2}, on setting $\mathcal{R}=\preceq $ in Theorem \ref{thm1} we obtain a result which remains a new:
 \begin{corollary}\label{cor6}
 	 Let $T$ and $g$ be self-mappings
 	defined on an ordered metric space $(X,d, \preceq)$ with $Y$ an $\overline{O}$-complete subspace of $X$. Assume that the following conditions hold:
 	\begin{enumerate}
 		\item [$(a)$] $\exists$ $x_0 \in X$ such that $gx_0\preceq Tx_0,$
 		\item [$(b)$]  $T(X)\subseteq Y \cap g(X),$
 		\item [$(c)$] $T$ is $g$-increasing,
 		\item [$(d)$] there exists an implicit relation $G\in \mathcal{G}$ such that $\big({for~ all}~ x,y\in X\;\textrm{with}\; gx\preceq gy\big)$
 		$$G\big(d(Tx,Ty), d(gx,gy), d(gx,Tx), d(gy,Ty), d(gx,Ty), d(gy,Tx)\big)\leq  0,$$
 		\item [{$(e)$}] \begin{enumerate}
 			\item [$(e_1)$]  $Y \subseteq g(X),$
 			\item [$(e_2)$] either $T$ is $(g,\overline{O})$-continuous or $T$ and $g$ are continuous or $(Y,d,\preceq)$ has ICU property,
 		\end{enumerate}
 	\end{enumerate}
 	or, alternatively
 	\begin{enumerate}
 		\item [{$(e^\prime)$}] \begin{enumerate}
 			\item [$(e_1^{\prime})$]  $T$ and $g$ are $\overline{O}$-compatible,
 			\item [$(e_2^{\prime})$]  $T$ and $g$ are $\overline{O}$-continuous.
 		\end{enumerate}
 	\end{enumerate}
 	Then $T$ and $g$  have a coincidence point.	
 \end{corollary}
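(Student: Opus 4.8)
The plan is to obtain Corollary \ref{cor6} as a direct specialization of Theorem \ref{thm1}, taking the binary relation $\mathcal{R}$ to be the partial order $\preceq$ and translating each order-theoretic hypothesis into its relation-theoretic counterpart. First I would record the basic dictionary: under $\mathcal{R}=\preceq$, a sequence $\{z_n\}$ is $\mathcal{R}$-preserving precisely when it is increasing, so that $\mathcal{R}$-completeness of $Y$ coincides with $\overline{O}$-completeness of $Y$, $\mathcal{R}$-continuity coincides with $\overline{O}$-continuity, and $\mathcal{R}$-compatibility coincides with $\overline{O}$-compatibility. With this identification, hypotheses $(a)$, $(b)$, $(d)$ and $(e_1)$ of Corollary \ref{cor6} become verbatim hypotheses $(a)$, $(b)$, $(d)$ and $(e_1)$ of Theorem \ref{thm1}.

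Next I would verify the two remaining matches that are not purely notational. By Remark \ref{rmk4.1}, the assertion that $T$ is $g$-increasing is equivalent to $\preceq$ being $(T,g)$-closed, which furnishes hypothesis $(c)$ of Theorem \ref{thm1}. Similarly, by Remark \ref{rmk4.2}, the ICU property of $(Y,d,\preceq)$ guarantees that $\preceq|_Y$ is $d$-self-closed, so the third alternative in $(e_2)$ of Corollary \ref{cor6} yields the third alternative in $(e_2)$ of Theorem \ref{thm1}; the other two alternatives, namely the $(g,\overline{O})$-continuity of $T$ and the continuity of $T$ and $g$, correspond directly to the $(g,\mathcal{R})$-continuity alternative and to the joint-continuity alternative, respectively.

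Finally, the alternative block $(e^\prime)$ transfers once one reads $\overline{O}$-compatibility as $\mathcal{R}$-compatibility and $\overline{O}$-continuity as $\mathcal{R}$-continuity. Having checked that every hypothesis of Corollary \ref{cor6} implies the corresponding hypothesis of Theorem \ref{thm1}, I would simply invoke Theorem \ref{thm1} to conclude that $T$ and $g$ admit a coincidence point.

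I expect the only delicate point to be the match between $(g,\overline{O})$-continuity (Definition \ref{4.3}) and the $(g,\mathcal{R})$-continuity actually exploited in the proof of Theorem \ref{thm1}. The safeguard is that in that proof the continuity hypothesis is applied only to the iteration sequence $\{x_n\}$ determined by $gx_{n+1}=Tx_n$, for which $\{gx_n\}$ is an increasing sequence converging to $gw$; for exactly such a sequence, $(g,\overline{O})$-continuity delivers $Tx_n\to Tw$, which is all that is needed. Thus no genuine strengthening is concealed in the translation, and the specialization goes through.
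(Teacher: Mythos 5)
Your proposal is correct and takes essentially the same route as the paper, whose entire proof consists of setting $\mathcal{R}=\preceq$ in Theorem~\ref{thm1} and citing Remarks~\ref{rmk4.1} and~\ref{rmk4.2}. Your additional verification that $(g,\overline{O})$-continuity suffices at the one step where $(g,\mathcal{R})$-continuity is invoked (since only the iteration sequence, whose $g$-image $\{gx_n\}$ is increasing, is ever used) is more careful than the paper's one-line justification, but it is the same specialization argument, not a different proof.
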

\begin{corollary}\label{cor4.4}
	In addition to the hypotheses of Corollary \ref{cor6}, if
	conditions $(u_1)$ and $(u_2)$ of Theorem \ref{thm2} are also satisfied, then $T$ and $g$ have a unique common fixed point.
\end{corollary}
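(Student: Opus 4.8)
The plan is to obtain Corollary \ref{cor4.4} as a direct specialization of Theorem \ref{thm2} by taking the binary relation $\mathcal{R}$ to be the partial order $\preceq$, exactly as Corollary \ref{cor6} was deduced from Theorem \ref{thm1}. First I would recall that, under the identification $\mathcal{R}=\preceq$, Remark \ref{rmk4.1} turns the hypothesis that $T$ is $g$-increasing into the statement that $\preceq$ is $(T,g)$-closed, while the order-theoretic notions of $\overline{O}$-completeness, $(g,\overline{O})$-continuity and $\overline{O}$-compatibility coincide respectively with $\mathcal{R}$-completeness, $(g,\mathcal{R})$-continuity and $\mathcal{R}$-compatibility. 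Consequently the hypotheses $(a)$--$(e)$, and the alternative $(e^\prime)$, of Corollary \ref{cor6} are precisely the hypotheses of Theorem \ref{thm1} read through this dictionary, so the existence of a coincidence point of $T$ and $g$ is already guaranteed.

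The second step is to check that the two extra assumptions carry over verbatim. Condition $(u_1)$ asks that $\Upsilon_{g}(\alpha,\beta,T,\mathcal{R}|_{g(X)}^s)$ be non-empty for each $\alpha,\beta\in T(X)$; with $\mathcal{R}=\preceq$ this is just the corresponding $g$-path hypothesis for the symmetric order, and it is assumed here by reference to Theorem \ref{thm2}. Condition $(u_2)$ requires that $T$ and $g$ commute at their coincidence points and that $G$ enjoys $(G_3)$; both are likewise assumed. Hence all the hypotheses of Theorem \ref{thm2} hold for $\mathcal{R}=\preceq$, and its conclusion, namely a unique common fixed point of $T$ and $g$, follows at once. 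In particular no separate argument for uniqueness is needed, since it is inherited from Step 1 of the proof of Theorem \ref{thm2}.

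The one point that needs care, and where I expect the only real bookkeeping obstacle to lie, is the translation of the self-closedness hypothesis on the subspace. In $(e_2)$ of Corollary \ref{cor6} the relevant assumption is that $(Y,d,\preceq)$ enjoys the ICU property, and I would invoke Remark \ref{rmk4.2}, applied to the subspace $Y$, to conclude that $\preceq|_Y$ is $d$-self-closed, thereby matching the hypothesis that $\mathcal{R}|_Y$ is $d$-self-closed used in Theorem \ref{thm1}. Once this identification, together with those of the continuity and compatibility notions, is recorded, the proof reduces to verifying that each order-theoretic hypothesis is the image of the corresponding relation-theoretic one under $\mathcal{R}=\preceq$; the substantive analytic content has already been discharged in Theorems \ref{thm1} and \ref{thm2}.
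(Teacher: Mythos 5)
Your proposal is correct and follows exactly the route the paper intends: Corollary \ref{cor4.4} is obtained by setting $\mathcal{R}=\preceq$ in Theorem \ref{thm2}, using Remarks \ref{rmk4.1} and \ref{rmk4.2} (the latter applied to the subspace $Y$) together with the identification of $\overline{O}$-completeness, $(g,\overline{O})$-continuity and $\overline{O}$-compatibility with their $\mathcal{R}$-theoretic counterparts, while $(u_1)$ and $(u_2)$ are assumed verbatim. The paper states this corollary without a separate proof precisely because the argument reduces to this dictionary-style specialization, which you have carried out faithfully.
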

\subsection{Results in ordered metric spaces via comparable mappings}
Before mentioning our the results, we need to recall some basic definitions.
\begin{definition}\cite{AI2016}
Let $T$ and $g$ be two self-mappings on $X$. Then the mapping $T$ is said to be a $g$-comparable if $Tx\prec \succ Ty,$ whenever  $gx\prec \succ gy$, for all
$x,y\in X$.
\end{definition}

\begin{remark}\label{rmk4.3}
$T$ is $g$-comparable if and only if $\prec\succ$ is $(T,g)$-closed.
\end{remark}

\begin{definition}\cite{AI2016}
	An ordered metric space
	$(X,d,\preceq)$ enjoys {\it TCC}\;(termwise monotone-convergence-c-bound) property if every termwise monotone convergence sequence $\{x_n\}$
	in $X$ (with $x_n\stackrel{d}{\longrightarrow} u$), admits a subsequence  $\{x_{n_k}\}$ such that $x_{n_k}\prec\succ u,\;\;\forall~ k\in \mathbb{N}_{0}.$
\end{definition}
\begin{remark}\label{rmk4.4}
	$(X,d,\prec\succ)$ enjoys TCC property if and only if $\prec\succ$ is $d$-self-closed.
\end{remark}

If we choose, $\mathcal{R}=\prec\succ$ in Theorem \ref{thm1}, then in view of Remarks \ref{rmk4.3} and \ref{rmk4.4}, we obtain a result which appears to be new in the existing literature.
\begin{corollary}\label{cor7}
	 Let $T$ and $g$ be self-mappings
	defined on an ordered metric space $(X,d, \preceq)$ with $Y$ an $O$-complete subspace of $X$. Assume that the following conditions hold:
	\begin{enumerate}
		\item [$(a)$] $\exists$ $x_0 \in X$ such that $gx_0\prec\succ Tx_0,$
		\item [$(b)$]  $T(X)\subseteq Y \cap g(X),$
		\item [$(c)$] $T$ is $g$-comparable,
		\item [$(d)$] there exists an implicit relation $G\in \mathcal{G}$ such that $\big({for~ all}~ x,y\in X\;\textrm{with}\; gx\prec\succ gy\big)$
		$$G\big(d(Tx,Ty), d(gx,gy), d(gx,Tx), d(gy,Ty), d(gx,Ty), d(gy,Tx)\big)\leq  0,$$
		\item [{$(e)$}] \begin{enumerate}
			\item [$(e_1)$]  $Y \subseteq g(X),$
			\item [$(e_2)$] either $T$ is $(g,O)$-continuous or $T$ and $g$ are continuous or $(Y,d,\preceq)$ has TCC property,
		\end{enumerate}
	\end{enumerate}
	or, alternatively
	\begin{enumerate}
		\item [{$(e^\prime)$}] \begin{enumerate}
			\item [$(e_1^{\prime})$]  $T$ and $g$ are $O$-compatible,
			\item [$(e_2^{\prime})$]  $T$ and $g$ are $O$-continuous.
		\end{enumerate}
	\end{enumerate}
	Then $T$ and $g$ have a coincidence point.	
\end{corollary}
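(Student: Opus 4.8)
The plan is to obtain Corollary \ref{cor7} as a direct specialization of Theorem \ref{thm1} by taking the binary relation $\mathcal{R}$ to be the comparability relation $\prec\succ$ induced by the partial order $\preceq$. Under this choice an $\mathcal{R}$-preserving sequence is precisely a termwise monotone sequence, so the whole task reduces to checking that each hypothesis of Corollary \ref{cor7} is nothing but the $\prec\succ$-avatar of the corresponding hypothesis of Theorem \ref{thm1}; once this dictionary is in place, the conclusion is immediate and no fresh estimate is needed.

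First I would match the structural hypotheses. Condition $(a)$, namely $gx_0\prec\succ Tx_0$, is by definition $(gx_0,Tx_0)\in\prec\succ$, so it coincides with hypothesis $(a)$ of Theorem \ref{thm1}; likewise condition $(b)$ and the inclusion $Y\subseteq g(X)$ in $(e_1)$ are verbatim the same. For the closedness requirement, Remark \ref{rmk4.3} asserts that $T$ is $g$-comparable if and only if $\prec\succ$ is $(T,g)$-closed, which is exactly hypothesis $(c)$ of Theorem \ref{thm1}. Finally, the contractive inequality $(d)$ of Corollary \ref{cor7} is imposed for all $x,y$ with $gx\prec\succ gy$, that is, for all $x,y$ with $(gx,gy)\in\prec\succ$, so it reads identically to $(d)$ of Theorem \ref{thm1}.

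It remains to reconcile the metric-type hypotheses, where the substance of the verification lies. Since a $\prec\succ$-preserving sequence is a termwise monotone sequence, the $O$-completeness of $Y$ says exactly that every such ($\mathcal{R}$-preserving) Cauchy sequence in $Y$ converges in $Y$, i.e.\ $Y$ is $\mathcal{R}$-complete. In the same way, unwinding the definitions shows that $(g,O)$-continuity, $O$-continuity and $O$-compatibility are precisely $(g,\mathcal{R})$-continuity (Definition \ref{4.3} with $\mathcal{R}=\prec\succ$), $\mathcal{R}$-continuity and $\mathcal{R}$-compatibility, so the disjunctive clause $(e_2)$ and the alternative block $(e')$ translate term for term. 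The one clause that again invokes the remarks is the TCC property of $(Y,d,\preceq)$: by Remark \ref{rmk4.4} it is equivalent to $\prec\succ$ being $d$-self-closed, hence to $\mathcal{R}|_Y$ being $d$-self-closed, which is the third alternative inside $(e_2)$ of Theorem \ref{thm1}.

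With every hypothesis of Theorem \ref{thm1} now verified for $\mathcal{R}=\prec\succ$, an application of that theorem yields a coincidence point of $T$ and $g$, which is the assertion of Corollary \ref{cor7}. The only point requiring care is the identification \emph{$\prec\succ$-preserving $=$ termwise monotone}, since it is this equivalence, rather than any new analytic argument, that makes the order-theoretic notions of completeness, continuity and compatibility collapse onto their relation-theoretic counterparts; beyond invoking Theorem \ref{thm1}, I expect no further work.
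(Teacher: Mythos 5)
Your proposal is correct and follows exactly the paper's own route: the paper proves Corollary \ref{cor7} in one line by setting $\mathcal{R}=\prec\succ$ in Theorem \ref{thm1} and invoking Remarks \ref{rmk4.3} and \ref{rmk4.4}, which is precisely your dictionary between $g$-comparability and $(T,g)$-closedness and between the TCC property and $d$-self-closedness. Your write-up merely makes explicit the term-by-term translation (including the identification of $\prec\succ$-preserving sequences with termwise monotone sequences, so that $O$-completeness, $(g,O)$-continuity and $O$-compatibility become their $\mathcal{R}$-theoretic counterparts) that the paper leaves implicit.
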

\begin{corollary}\label{cor4.7}
	In addition to the hypotheses of Corollary \ref{cor7}, if
	conditions $(u_1)$ and $(u_2)$ of Theorem \ref{thm2} are also satisfied, then $T$ and $g$ have a unique common fixed point.
\end{corollary}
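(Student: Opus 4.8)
The plan is to recognize this corollary as the comparable-mapping specialization of the uniqueness Theorem \ref{thm2}, obtained through the substitution $\mathcal{R}=\prec\succ$, in exactly the same way that Corollary \ref{cor7} specializes the existence Theorem \ref{thm1}. First I would observe that Corollary \ref{cor7} already guarantees a coincidence point of $T$ and $g$, so that $C(T,g)\neq\emptyset$; the entire task here is therefore to upgrade this existence to a unique common fixed point by importing the two extra hypotheses $(u_1)$ and $(u_2)$.

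The decisive translation step is to invoke Remarks \ref{rmk4.3} and \ref{rmk4.4}. By Remark \ref{rmk4.3}, the assumption that $T$ is $g$-comparable is equivalent to $\prec\succ$ being $(T,g)$-closed, which is hypothesis $(c)$ of Theorem \ref{thm1} under $\mathcal{R}=\prec\succ$; by Remark \ref{rmk4.4}, the $TCC$ property of $(Y,d,\preceq)$ is equivalent to $\prec\succ|_Y$ being $d$-self-closed, matching the self-closedness alternative in $(e_2)$. The remaining order-theoretic notions align with their relation-theoretic counterparts verbatim: $(g,O)$-continuity corresponds to $(g,\mathcal{R})$-continuity, $O$-completeness of $Y$ to $\mathcal{R}$-completeness, and $O$-compatibility to $\mathcal{R}$-compatibility. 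Hence every hypothesis of Theorem \ref{thm1} holds with $\mathcal{R}=\prec\succ$, which is precisely the content of Corollary \ref{cor7}.

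With existence thus secured, I would simply read the added conditions $(u_1)$ and $(u_2)$ of Theorem \ref{thm2} through the same substitution: namely that $\Upsilon_{g}\big(\alpha,\beta,T,(\prec\succ)|_{g(X)}^s\big)$ is non-empty for each $\alpha,\beta\in T(X)$, and that $T$ and $g$ commute at their coincidence points with $G$ additionally satisfying $(G_3)$. Since $\prec\succ$ is itself symmetric, its symmetric closure coincides with $\prec\succ$, so no reinterpretation of the $g$-path family is needed. All hypotheses of Theorem \ref{thm2} being met under $\mathcal{R}=\prec\succ$, its conclusion transfers directly and yields a unique common fixed point of $T$ and $g$.

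I expect no genuine obstacle, since the corollary is a pure instantiation rather than a fresh argument. The only point demanding care is bookkeeping: one must confirm that the restriction $\prec\succ|_{g(X)}$, the symmetric closure, and the directedness and $g$-path notions all specialize consistently with the general framework of Section \ref{SC:Main results}, so that the three-step argument of Theorem \ref{thm2} (equality of points of coincidence, existence via commutativity at a coincidence point, and uniqueness) applies word for word without any modification.
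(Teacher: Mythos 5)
Your proposal is correct and coincides with the paper's own (implicit) argument: the paper states Corollary \ref{cor4.7} without proof precisely because it is the instantiation $\mathcal{R}=\prec\succ$ of Theorem \ref{thm2}, with Remarks \ref{rmk4.3} and \ref{rmk4.4} supplying the translation of $g$-comparability and the TCC property into $(T,g)$-closedness and $d$-self-closedness, exactly as you describe. Your additional observation that $\prec\succ$ is already symmetric, so the symmetric closure and the $g$-path family need no reinterpretation, is a correct and worthwhile piece of bookkeeping.
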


\section{Examples}

We utilize the following example to demonstrate the genuineness of our extension.

\begin{example}\label{exm2} Let $(X=[0,1),d)$ be a usual metric space equipped with a binary relation $$\mathcal{R} =\big\{(x,y)\in X\times X~|~x\leq y ~{\text{and}}~ 2 ~{\text {devides}}~ (y-x)\big\}.$$
Then $X$ is neither complete, nor $\mathcal{R}$-complete.
Define mappings $T, g:X\rightarrow X$ by
$$T(x)=0,~\forall x\in X;~~ {\text{and}}
\quad g(x)=x^2,~\forall x\in X.$$
Then $T(X)=\{0\}\subset [0,\frac{1}{2}]\subseteq [0,1)=g(X)$ where $Y=[0,\frac{1}{2}]$ is $\mathcal{R}$-complete.
Clearly $\mathcal{R}$ is $(T,g)$-closed, and $x_0=0$,
$(g0,T0)\in \mathcal{R}.$ Define an implicit relation
$G:\mathbb{R}^{6}_{+}\rightarrow\mathbb{R}$ by $G(r_1,r_2,r_3,r_4,r_5,r_6)=r_1-\frac{3}{5}(r_3+r_4).$
Since $T$ and $g$ both are continuous on $X$, by straightforward calculation it is easy to see that all the conditions $(i.e., (a)-(e))$ of Theorem \ref{thm1} are satisfied. Observe that, $T$ and $g$ have coincidence point, namely, $`0$'.
Moreover, $T$ and $g$ are commute at the coincidence point $`0$'. Clearly, $\Upsilon_{g}(\alpha,\beta,T,\mathcal{R}|_{g(X)}^s)$ is non-empty, for each $\alpha,\beta\in T(X).$ Observe that $T$ and $g$ have a unique common fixed point (say $``0$").

Notice that if we replace the mapping $g$ by the identity mapping on $X$, then still our results are also applicable to the present example. But Theorems 1 and 2 due to Ahmadullah et al. \cite{AhmadullahAI} can not be applied because $X$ is not $\mathcal{R}$-complete. Thus our results ($i.e.,$ Theorems \ref{thm1} and \ref{thm2})
  are genuine extension of the corresponding results due to Ahmadullah et al. \cite{AhmadullahAI}.
\end{example}

\begin{example}\label{exm3} Consider $X=[0,3)$ with usual metric $d$.
Define mappings $T, g:X\rightarrow X$ by
$$T(x)=\left\{
\begin{array}{ll}
0, & \hbox{$x\in [0,1]$;} \\
1, & \hbox{$x\in (1,3),$}
\end{array}
\right. \text{and}$$ \quad $$g(x)=\left\{
\begin{array}{ll}
0, & \hbox{$x\in [0,1)$;} \\
1, & \hbox{$x=1$;}\\
2, & \hbox{$x\in (1,3)$} ,\\
\end{array}
\right.$$
and a binary relation $\mathcal{R}=\big\{(0,0),(1,1),(2,2),(0,1),
(0,2),(1,2)\big\}$. Then $T(X)= Y \subset g(X)$,
 where $Y=\{0,1\}$ is a $\mathcal{R}$-complete.
Clearly, $\mathcal{R}$ is $(T,g)$-closed but neither $T$ is continuous, nor $g$ is continuous. Take any $\mathcal{R}$-preserving sequence
$\{y_n\}$ in $Y$ with $$y_n\stackrel{d}{\longrightarrow} y ~{\rm such~ that}~
(y_n,y_{n+1})\in\mathcal{R}, ~{\rm for~ all}~ n\in \mathbb{N} _0.$$
If $(y_n,y_{n+1})\in\mathcal{R}$, for all $n\in\mathbb{N} _0,$ then there exists an integer $N \in\mathbb{N} _0$
such that $y_n=y \in \{0,1\} ~{\rm for~ all}~ n\geq N$. So, we can take a subsequence
$\{y_{n_k}\} \subseteq \{y_n\}$ such that $y_{n_k}=y$, for all $k\in \mathbb{N} _0$, which
amounts to saying that $[y_{n_k},y]\in \mathcal{R}$, $\forall k\in \mathbb{N} _0$. Therefore,
$\mathcal{R}|_Y$ is $d$-self-closed.

Define an implicit relation $G :\mathbb{R}^{6}_{+} \to \mathbb{R}$ by
$$G(r_1, r_2, r_3, r_4, r_5, r_6)= r_1-\frac {1}{5}r_5-\frac{3}{5}r_6,$$
which meets the requirements of our implicit relation with $\phi(t)=\frac{1}{4}t$. By a routine calculation one can easy verify
 assumption $(d)$ of Theorem \ref{thm1}. Also, $T$ and $g$ are commute on the set of coincidence points $(i.e., C(T,g)=[0,1))$. Since every pair of elements of  $g(X)$ are comparable under the binary relation $\mathcal{R}$, $\Upsilon_{g}(\alpha,\beta,T,\mathcal{R}|_{g(X)}^s)$ is non-empty, for each $\alpha,\beta\in T(X).$
Thus, all the requirements of Theorems \ref{thm1} and \ref{thm2} are met out. Observe that $T$ and $g$ have a unique common fixed point (namely, $`0$').

With a view to establish genuineness of our extension, notice that
$$(g1,g2)\in \mathcal{R} ~{\rm but}~ d(T1,T2) \leq k d(g1,g2),~i.e.,~ 1\leq k$$
which shows that the contractive condition of Theorem 1 due to Alam and Imdad \cite{Alamimdad} is not satisfied.
Thus, in all our Theorems \ref{thm1} and \ref{thm2} are applicable  to the present example
while Theorem 1 of Alam and Imdad is not, which substantiates the utility of Theorems \ref{thm1} and \ref{thm2}.
\end{example}

\section{An application:}

In this section, as an application of Theorem \ref{thm1}, we establish an existence theorem for the solution of some generalized Urysohn integral equation
\begin{equation}\label{aeq1}
gu(t)=\int_{0}^{t} K(t,\tau, u(\tau))d\tau+\alpha(t),~~ t\in I=[0,T] ~~(T>0)
\end{equation}
where $K:I\times I\times \mathbb{R}^n\to \mathbb{R}^n,~ \alpha : I\to \mathbb{R}^n$ are continuous and $g : X \to X$ surjective.

Consider $X=C(I, \mathbb{R}^n)$ is endowed with the sup-metric $d_\infty$ defined as:
 $$d_\infty (u,v)=\sup_{t\in I} |u(t)-v(t)|,~ {\text{for ~all ~}u,v\in X,}$$
and $\eta: \mathbb{R}^n\times \mathbb{R}^n\to \mathbb{R}$ is a function.

\begin{theorem}
	Suppose the following conditions hold:
\begin{enumerate}
	\item [$(H_1)$] there exists $u_0\in X$ such that (for all $t\in I$)
	$$\eta \Big(gu_0(t), \int_{0}^{t} K(t,\tau, u_0(\tau))d\tau+\alpha(t)\Big)\leq 0;$$
	\item [$(H_2)$] for all $u,v\in X$ and for all $t\in I$, if $\eta \big(gu(t), gv(t)\big)\leq 0$, then
	$$\eta \Big(\int_{0}^{t} K(t,\tau, u(\tau))d\tau+\alpha(t), \int_{0}^{t} K(t,\tau, v(\tau))d\tau+\alpha(t)\Big)\leq 0;$$
	\item [$(H_3)$] if $\{u_n\} \subset X$ is a sequence such that $u_n\stackrel{d_\infty}{\longrightarrow} u$ with $\eta \big(u_n(t), u_{n+1}(t)\big)\leq 0,$
	 for all $n\in \mathbb{N}_0$ and $t\in I$, then there exists a subsequence $\{u_{n_k}\}$ of $\{u_{n}\}$ with $\eta \big(u_{n_k}(t), u(t)\big)\leq 0$ or  $\eta \big(u(t), u_{n_k}(t)\big)\leq 0$, for all $k\in \mathbb{N}_0$ and $t\in I$;
	 \item [$(H_4)$] for each $t, \tau\in [0,T]$ and for all $u,v\in X ~\text {with}~ \eta\big(gu(t),gv(t)\big)\leq 0$; and there exists an upper semi-continuous mapping $\phi\in\Phi$ such that
	  $$\big|K(t,\tau, u(\tau))-K(t,\tau, v(\tau))\big| \leq \phi\Big(|gu(\tau)-gv(\tau)|\Big);$$
\item [${(H_5)}$] $\displaystyle\sup_{t\in I} \int_{0}^{t}d\tau<1.$
\end{enumerate}
Then the integral equation (\ref{aeq1}) has a solution $u^*\in X$.
\end{theorem}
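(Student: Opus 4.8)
The plan is to recast the integral equation as a coincidence point problem and then invoke Theorem \ref{thm1}. First I would define the operator $T:X\to X$ by $(Tu)(t)=\int_{0}^{t}K(t,\tau,u(\tau))\,d\tau+\alpha(t)$, which is well defined since $K$ and $\alpha$ are continuous; then $u^*$ solves \eqref{aeq1} exactly when $Tu^*=gu^*$, i.e. $u^*$ is a coincidence point of $T$ and $g$. On $X=C(I,\mathbb{R}^n)$ I would put the binary relation $\mathcal{R}=\{(u,v)\in X\times X: \eta(u(t),v(t))\leq 0\ \text{for all}\ t\in I\}$. Since $(X,d_\infty)$ is complete, taking $Y=X$ makes $Y$ an $\mathcal{R}$-complete subspace by Remark \ref{rmk1}, and surjectivity of $g$ gives $g(X)=X$.

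Next I would verify the hypotheses of Theorem \ref{thm1} one at a time. Condition $(a)$ is precisely $(H_1)$ with $x_0=u_0$; condition $(b)$ holds trivially because $T(X)\subseteq X=Y\cap g(X)$; condition $(c)$, the $(T,g)$-closedness of $\mathcal{R}$, is exactly the implication stated in $(H_2)$; in $(e)$ the requirement $Y\subseteq g(X)$ is immediate, and the $d$-self-closedness of $\mathcal{R}|_Y=\mathcal{R}$ is precisely $(H_3)$. For the governing implicit relation I would take $G(r_1,r_2,r_3,r_4,r_5,r_6)=r_1-\phi(r_2)$ with the $\phi$ furnished by $(H_4)$. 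Because $\phi\in\Phi$ is increasing with $\phi(0)=0$ and is upper semi-continuous, $G$ is lower semi-continuous, so $G\in\mathcal{G}$; moreover $(G_1)$ holds since $G(r,s,s,r,r+s,0)=r-\phi(s)\leq 0$ yields $r\leq\phi(s)$, and $(G_2)$ holds since $G(r,0,r,0,0,r)=r>0$ for $r>0$.

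The crux is the contractive condition $(d)$. For any $u,v$ with $(gu,gv)\in\mathcal{R}$ I would estimate, for each $t\in I$,
\begin{align*}
|Tu(t)-Tv(t)| &\leq \int_{0}^{t}\big|K(t,\tau,u(\tau))-K(t,\tau,v(\tau))\big|\,d\tau \\
&\leq \int_{0}^{t}\phi\big(|gu(\tau)-gv(\tau)|\big)\,d\tau \\
&\leq \phi\big(d_\infty(gu,gv)\big)\int_{0}^{t}d\tau,
\end{align*}
where the second line uses $(H_4)$ and the third uses that $\phi$ is increasing together with $|gu(\tau)-gv(\tau)|\leq d_\infty(gu,gv)$. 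Taking the supremum over $t\in I$ and invoking $(H_5)$ then gives $d_\infty(Tu,Tv)\leq\phi\big(d_\infty(gu,gv)\big)$, which is exactly $G\big(d_\infty(Tu,Tv),d_\infty(gu,gv),\ldots\big)\leq 0$. With $(a)$–$(e)$ all verified, Theorem \ref{thm1} yields a coincidence point $u^*\in X$ of $T$ and $g$, that is, a solution of \eqref{aeq1}.

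The main obstacle is the estimate in $(d)$: the two delicate points are pulling the increasing $\phi$ outside the integral, which is legitimate only because $\phi$ is monotone and $|gu(\tau)-gv(\tau)|$ is dominated by the sup-norm, and then using $(H_5)$ to absorb the factor $\int_{0}^{t}d\tau$ so that the final bound collapses to $\phi(d_\infty(gu,gv))$ rather than a strictly larger quantity. Everything else is a faithful translation of $(H_1)$–$(H_3)$ into the relation-theoretic language required by Theorem \ref{thm1}.
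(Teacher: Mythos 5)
Your proposal is correct and follows essentially the same route as the paper's own proof: the same operator $T$, the same relation $\mathcal{R}$ built from $\eta$, the same implicit relation $G(r_1,\ldots,r_6)=r_1-\phi(r_2)$, and the same estimate pulling the increasing $\phi$ outside the integral and absorbing $\int_0^t d\tau$ via $(H_5)$ before invoking Theorem \ref{thm1}. If anything, you are slightly more careful than the paper in explicitly checking $(G_1)$, $(G_2)$, and the lower semi-continuity of $G$.
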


\begin{proof}
	Define a mapping $T: C(I,\mathbb{R}^n) \to C(I,\mathbb{R}^n)$ by
	$$(Tu)(t)=\int_{0}^{t} K(t,\tau, u(\tau))d\tau+\alpha(t),~~ t\in I $$
	and a binary relation
	$$\mathcal{R}=\big\{(u,v)\in X\times X~|~ \eta \big(u(t), v(t)\big)\leq 0, \forall t\in I\big\}.$$
	Then observe that $(X, d_\infty)$ is $\mathcal{R}-$complete.
	
	\indent $(a)$ By using $(H_1)$, there exists $u_0\in X$ such that $(gx_0, Tx_0)\in \mathcal{R}$.
	
	\indent $(b)$ Let $(gu, gv)\in \mathcal{R}$, for all $u,v\in X$. Then $\eta \big(gu(t), gv(t)\big)\leq 0$, for all $u,v\in X$ and for all $t\in I$,
	\begin{eqnarray*}
	&\Rightarrow&\eta \Big(\int_{0}^{t} K(t,\tau, u(\tau))d\tau+\alpha(t), \int_{0}^{t} K(t,\tau, v(\tau))d\tau+\alpha(t)\Big)\leq 0; (\text{by ~using~} (H_2))\\
	&\Rightarrow&\eta \big(Tu(t), Tv(t)\big)\leq 0, {\text {for~ all }}~u,v\in X ~{\text{and~ for ~all} }~t\in I\\
	&\Rightarrow&(Tu,Tv)\in \mathcal{R}.
	\end{eqnarray*}
	Hence $\mathcal{R}$ is $(T,g)$-closed.
	
	\indent $(c)$ Since $g$ is surjective, $T(X)\subseteq X = g(X)$, where $X$ is $\mathcal{R}$-complete.
	
	\indent $(d)$ For all $u,v\in X ~{\text{and~ for ~all} }~t\in I $
	\begin{eqnarray*}
		|Tu(t)-Tv(t)|&=&\bigg|\int_{0}^{t}K(t,\tau, u(\tau))d\tau-\int_{0}^{t}K(t,\tau, v(\tau))d\tau\bigg| \\
		&\leq&\int_{0}^{t}\big|K(t,\tau, u(\tau))-K(t,\tau, v(\tau))\big|d\tau  \\
		&\leq&\int_{0}^{t}\phi\Big(|gu(\tau)-gv(\tau)|\Big)d\tau\\
		&\leq&\phi \Big(d_\infty(gu,gv)\Big)\times \int_{0}^{t}d\tau\\
		&<&\phi\Big(d_\infty(gu,gv)\Big)
	\end{eqnarray*}
	Thus $$d_{\infty}(Tu,Tv)\leq\phi\Big(d_\infty(gu,gv)\Big)$$
	Now, we define a implicit relation $G :\mathbb{R}^{6}_{+} \to \mathbb{R}$ by
	$$G(r_1, r_2, r_3, r_4, r_5, r_6)= r_1-\phi \big(r_2\big),$$
where $\phi:\mathbb{R}_{+}\to \mathbb{R}_{+} $ is a upper semi-continuous such that $\phi\in \Phi.$

	\indent $(e)$ Let  $\{u_n\} \subset X$ be a sequence such that $u_n\stackrel{d_\infty}{\longrightarrow} u$ with $(u_n, u_{n+1})\in \mathcal{R}.$ Then by assumption $(H_3)$,
	 we can find a subsequence $\{u_{n_k}\}$ of $\{u_{n}\}$ with $[u_{n_k}, u]\in \mathcal{R},$ for all $k\in \mathbb{N}_0.$ So $\mathcal{R}$ is $d_{\infty}$-self-closed.

 Thus all the hypotheses of Theorem \ref{thm1} are fulfilled. Hence by Theorem \ref{thm1}, it follows that $T$ and $g$ have at least  one coincidence point (say, $u^* \in X), i.e., Tu^*=gu^*.$ Consequently, the integral equation (\ref{aeq1}) has at least one solution $u^*\in X$.
\end{proof}

\vspace{.5cm}
\noindent{\bf Competing interests.} The authors declare that they
have no competing interest.

\vspace{.3cm}
\noindent{\bf Author's contributions.} All the authors read and approved
the final manuscript.

\vspace{.5cm}
\noindent{\bf References}

\end{document}